\documentclass[10pt,leqno,letterpaper]{article}
\usepackage[utf8]{inputenc}
\usepackage{amsmath}
\usepackage{amsfonts}
\usepackage{amssymb}
\usepackage{graphicx}
\usepackage{verbatim}
\usepackage{mathrsfs}
\usepackage{upref,amsthm,amsxtra,exscale}
\usepackage{cite}
\usepackage[colorlinks=true,urlcolor=blue,
citecolor=red,linkcolor=blue,linktocpage,pdfpagelabels,
bookmarksnumbered,bookmarksopen]{hyperref}

\usepackage{fullpage}

\newtheorem{theorem}{Theorem}[section]

\newtheorem{remark}[theorem]{Remark}

\newtheorem{lemma}[theorem]{Lemma}
\newtheorem{proposition}[theorem]{Proposition}
\newtheorem{definition}[theorem]{Definition}

\newtheorem{questions}[theorem]{Questions}

\numberwithin{equation}{section}

\def\r{\mathbb{R}}
\def\rn{\mathbb{R}^N}
\def\z{\mathbb{Z}}
\def\zl{\z/\ell\z}
\def\s1{\mathbb{S}^1}
\def\n{\mathbb{N}}
\def\cc{\mathbb{C}}
\def\eps{\varepsilon}
\def\rh{\rightharpoonup}
\def\io{\int_{\Omega}}
\def\irn{\int_{\r^N}}
\def\vp{\varphi}

\def\vr{\varrho}
\def\o{\Omega}

\def\bf{\boldsymbol}

\def\cC{\mathcal{C}}
\def\cD{\mathcal{D}}

\def\cJ{\mathcal{J}}

\def\cM{\mathcal{M}}
\def\cN{\mathcal{N}}

\def\cP{\mathcal{P}}

\def\cW{\mathcal{W}}

\def\supp{\text{supp}}
\def\bar{\overline}
\def\what{\widehat}

\def\i{\mathrm{i}}
\def\dist{\mathrm{dist}}

\author{Mónica Clapp\footnote{M. Clapp was supported by CONACYT (Mexico) through the research grant A1-S-10457.}, \ Jorge Faya \footnote{J. Faya was supported  by ANID (Chile) through the project FONDECYT/Iniciacion 11190423}  \ and \ Alberto Saldaña\footnote{A. Saldaña was supported by UNAM-DGAPA-PAPIIT (Mexico) grant IA100923  and by CONACYT (Mexico) grant A1-S-10457 and  CBF2023-2024-116.}\ \footnote{Corresponding Author, any correspondence should be addressed to \texttt{alberto.saldana@im.unam.mx}.}}
\title{Optimal pinwheel partitions for the Yamabe equation}
\date{}

\begin{document}
\maketitle

\begin{abstract}
We establish the existence of an optimal partition for the Yamabe equation in $\rn$ made up of mutually linearly isometric sets, each of them invariant under the action of a group of linear isometries.

To do this, we establish the existence of a solution to a weakly coupled competitive Yamabe system, whose components are invariant under the action of the group, and each of them is obtained from the previous one by composing it with a linear isometry. We show that, as the coupling parameter goes to $-\infty$, the components of the solutions segregate and give rise to an optimal partition that has the properties mentioned above.

Finally, taking advantage of the symmetries considered, we establish the existence of infinitely many sign-changing solutions for the Yamabe equation in $\rn$ that are different from those previously found by W.Y. Ding, and del Pino, Musso, Pacard and Pistoia.
\smallskip

\emph{Key words and phrases.} Yamabe equation, Yamabe system, competitive weakly coupled critical elliptic system, phase separation, optimal partition, sign-changing solution.
\smallskip

\emph{2020 Mathematics Subject Classification.}
35B06, 
35J50, 
35B33, 
35B36. 
\end{abstract}

\section{Introduction}

A fundamental question in differential geometry is whether every closed Riemannian manifold $M$ with given metric $g$ possesses a Riemannian metric $\what g$ conformally equivalent to $g$ (i.e., $\what g$ is a positive multiple of $g$) with constant scalar curvature. In dimension $2$ the answer is positive; it is a consequence of the classical uniformization theorem of complex analysis. The question whether this is also true in higher dimensions was posed by Yamabe \cite{y} and it is called the Yamabe problem. Providing a positive answer in dimension $\geq 3$ is not easy and it took 25 years until this problem was finally solved by the combined efforts of Yamabe \cite{y}, Trudinger \cite{t}, Aubin \cite{a} and Schoen \cite{sch}. A detailed discussion may be found in \cite{lp}. 

Solving the Yamabe problem is equivalent with establishing the existence of a positive solution to an elliptic equation on $M$ called the Yamabe equation. If $u$ is a least energy nodal solution to the Yamabe equation, i.e., a solution that changes sign, then it gives rise to a partially defined metric that solves the Yamabe problem in each nodal domain.  Ammann and Humbert called it a generalized metric \cite{ah}. It satisfies an optimality condition, namely, the domains of definition of the metric form an optimal partition for the Yamabe equation. 

An optimal partition for the Yamabe equation on $M$ is a finite collection of nonempty pairwise disjoint open subsets of $M$, whose closures cover $M$, such that the Yamabe equation with Dirichlet boundary condition in each one of these sets has a least energy solution. Furthermore, the following optimality condition is satisfied: the sum of the energies of these solutions is minimal with respect to any other partition with the same number of elements satisfying the same properties. Finding a partition that minimizes some cost function is, in general, not an easy problem and it appears in many contexts, both in mathematics and in other fields as well. We refer the reader to \cite{bb,ta} for a survey on this topic.

Not every manifold admits an optimal partition, for instance, as we explain below, the standard $N$-sphere $\mathbb{S}^N$ does not. Conditions for the existence of an optimal partition for the Yamabe equation were recently established in \cite{cpt}.

In this paper we focus on the Yamabe equation in $\rn$, which is conformally equivalent to $\mathbb{S}^N$ via the stereographic projection. Namely, we consider the problem
\begin{equation} \label{eq:yamabe0}
\begin{cases}
-\Delta u = |u|^{2^*-2}u, \\
u\in D^{1,2}(\rn),
\end{cases}
\end{equation}
where $N\geq 4$, $2^*:=\frac{2N}{N-2}$ is the critical Sobolev exponent, and $D^{1,2}(\rn)$ is the Sobolev space of functions in $L^{2^*}(\rn)$ whose weak derivatives are in $L^2(\rn)$. 

This equation does not admit an optimal partition in the sense defined above, because, in any open subset $\o$ of $\rn$ whose closure has nonempty complement, the Dirichlet problem
$$-\Delta u = |u|^{2^*-2}u \text{ \ in \ } \o, \qquad u=0 \text{ \ on \ }\partial\o,$$
does not have a least energy solution \cite[Theorem III.1.2]{s}. This is connected to the well-known invariance under rescalings of this problem, which causes a lack of compactness. But if one asks that, in addition, the sets of the partition are invariant under the action of some group of transformations, then the question has better chances.  For example, an optimal partition for \eqref{eq:yamabe0} whose elements are invariant under the conformal action of $O(m)\times O(n)$, $m+n=N+1$, $m,n\geq 2$, induced by the obvious action on $\mathbb{S}^N$ via the stereographic projection, was exhibited in \cite{css}.

It is interesting to explore whether there are optimal partitions for \eqref{eq:yamabe0} having other shapes and properties. For instance,
\begin{center}
\emph{are there optimal partitions whose elements are mutually linearly isometric?}
\end{center}
namely, optimal partitions where all the subsets have the same shape and size. We note that the elements of the partitions displayed in \cite{css} are not even homeomorphic to each other.  In this paper, we give a positive answer. To do this, the main challenge is to find two compatible types of symmetries: one that prevents the lack of compactness of the critical problems ($G$ in the notation below), and the other that ensures that all the subsets in the partition have the same shape and size ($\vr$ in the notation below).  To be more precise, let us describe the kind of partitions that we are interested in.

Let $G$ be a closed subgroup of the group $O(N)$ of linear isometries of $\rn$, $\vr\in O(N)$ and $\ell\geq 2$. Recall that an open subset $\o$ of $\rn$ is called $G$-invariant if $gx\in\o$ for every $g\in G$ and $x\in\o$, and a function $u:\o\to\r$ is called $G$-invariant if $u(gx)=u(x)$ for every $g\in G$ and $x\in\o$. An \textbf{optimal $(G,\vr,\ell)$-partition for \eqref{eq:yamabe0}} is an $\ell$-tuple $(\o_1,\ldots,\o_\ell)$ of subsets of $\rn$ with the following properties:
\begin{itemize}
\item[$(P_1)$] $\o_i\neq\emptyset$, \ $\o_i\cap\o_j=\emptyset$ if $i\neq j$, \ $\rn=\bigcup_{i=1}^\ell\overline{\o}_i$ \ and \ $\o_i$ is open and $G$-invariant.
\item[$(P_2)$] For each  $i=1,\ldots,\ell$, the critical problem in $\o_i$
\begin{equation} \label{eq:critical_problem}
\begin{cases}
-\Delta u = |u|^{2^*-2}u, \\
u\in D^{1,2}_0(\o_i), \\
u\text{ is }G\text{-invariant},
\end{cases}
\end{equation}
has a nontrivial least energy solution $u_i$.
\item[$(P_3)$] $\vr(\o_2)=\o_1, \ \ldots \ , \ \vr(\o_\ell)=\o_{\ell-1}, \ \ \vr(\o_1)=\o_\ell.$
\item[$(P_4)$] The sum of the energies of the $u_i$'s is minimal with respect to any other partition of $\rn$ satisfying $(P_1)$, $(P_2)$ and $(P_3)$.
\end{itemize}

Note that each element $\o_i$ of an optimal $(G,\vr,\ell)$-partition for \eqref{eq:yamabe0} must be unbounded, and that it cannot be strictly star-shaped, otherwise the critical problem \eqref{eq:critical_problem} would not have a nontrivial solution, see \cite[Theorem III.1.3]{s}. Furthermore, $G$ cannot be the trivial group because, as we mentioned above,  \eqref{eq:critical_problem} does not have a nontrivial least energy solution in this case.

In this setting, we now follow the strategy presented in \cite{ctv}: in order to obtain an optimal $(G,\vr,\ell)$-partition we consider the Yamabe system
\begin{equation} \label{eq:system_rn}
\begin{cases}
-\Delta u_i = |u_i|^{2p-2}u_i+\sum\limits_{\substack{j=1\\j\neq i}}^\ell\beta|u_j|^p|u_i|^{p-2}u_i, \\
u_i\in D^{1,2}(\rn),\qquad i=1,\ldots,\ell,
\end{cases}
\end{equation}
where $p:=\frac{N}{N-2}$ and $\beta <0$, and we look for a nontrivial least energy solution $\bf u=(u_1,\ldots,u_\ell)$ that satisfies
\begin{itemize}
\item[$(S_1)$] $u_j$ is $G$-invariant for every $j=1,\ldots,\ell$, 
\item[$(S_2)$] $u_{j+1}=u_j\circ\vr$ \ for every $j=1,\ldots,\ell$, \ where \ $u_{\ell+1}:=u_1$.
\end{itemize}
As shown in \cite{ctv}, the components of least energy solutions to systems of this type separate spatially as $\beta\to-\infty$ and are bound to give rise to an optimal partition. This property can be observed in some physical phenomena. Systems like \eqref{eq:system_rn} describe, for instance, the behavior of standing waves for a mixture of Bose-Einstein condensates of hyperfine states that overlap in space. As the repelling force $\beta$ between different states increases, the components of least energy solutions to  \eqref{eq:system_rn} become spatially segregated. This phenomenon is
called phase separation and has been studied extensively (see the survey \cite{ta}).

Now, \eqref{eq:system_rn} does not necessarily have a least energy solution satisfying $(S_1)$ and $(S_2)$. For instance, if $G$ is the trivial group and $\vr$ is the identity, the system reduces to the single equation
$$-\Delta u = (1+(\ell-1)\beta)|u|^{2p-2}u,\qquad u\in D^{1,2}(\rn),$$
which does not have a nontrivial solution if $1+(\ell-1)\beta<0$. 

In the following, we exhibit a suitable symmetry group $G$ and a compatible linear isometry $\vr$ such that \eqref{eq:system_rn} admits a solution that gives rise to an optimal $(G,\vr,\ell)$-partition for \eqref{eq:yamabe0}. We write $\rn\equiv\cc^2\times\r^{N-4}$ and a point in $\rn$ as $x=(z,y)$ with $z=(z_1,z_2)\in\cc\times\cc$ and $y\in\r^{N-4}$. Let $\tau:\cc^2\to\cc^2$ be given by
\begin{equation*}
\tau(z_1,z_2):=(-\overline{z}_2,\overline{z}_1),
\end{equation*}
where $\overline{z}$ is the complex conjugate of $z$. We define $G$ to be the group whose elements are the complex numbers on the unit circle $\s1$ acting on $\rn$ by
\begin{equation}\label{eq:G}
gx:=(gz_1,\overline{g}z_2,y),\qquad\text{for \ }g\in G, \ x=(z_1,z_2,y)\in\cc\times\cc\times\r^{N-4},
\end{equation}
and take $\vr:=\vr_\ell\in O(N)$ to be
\begin{equation}\label{eq:rho}
\vr_\ell(z,y):=\left(\Big(\cos\frac{\pi}{\ell}\Big)z+\Big(\sin\frac{\pi}{\ell}\Big)\tau z,\,y\right)\qquad\text{for \ }(z,y)\in\cc^2\times\r^{N-4}.
\end{equation}
Inspired by \cite{cp}, an optimal $(G,\vr_\ell,\ell)$-partition for these particular choices is called an \textbf{optimal $\ell$-pinwheel partition for \eqref{eq:yamabe0}} and a solution to the system \eqref{eq:system_rn} that satisfies $(S_1)$ and $(S_2)$ for these data is called a \textbf{pinwheel solution}.

We obtain the following existence result.

\begin{theorem} \label{thm:existence}
If $N\geq 4$ the system \eqref{eq:system_rn} has a nontrivial least energy pinwheel solution $\bf u=(u_1,\ldots,u_\ell)$ that satisfies $u_j\geq 0$ for every $j=1,\ldots,\ell$.
\end{theorem}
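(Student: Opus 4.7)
The plan is to reduce the $\ell$-component system \eqref{eq:system_rn} to a single scalar equation on the space of $G$-invariant functions and then to minimize the resulting energy on a Nehari manifold. \textbf{Reduction to a scalar problem.} First I would check that every $g\in G$ commutes with $\vr_\ell$ as an isometry of $\rn$, and that $\vr_\ell^{\,\ell}=-\mathrm{Id}_{\cc^2}$ acts on $\rn$ as the element $-1\in\s1=G$. These two facts imply that the assignment
\[
u\longmapsto(u,\,u\circ\vr_\ell,\,\ldots,\,u\circ\vr_\ell^{\,\ell-1})
\]
is a bijection between the space $D^{1,2}_G(\rn)$ of $G$-invariant functions in $D^{1,2}(\rn)$ and the closed subspace of $[D^{1,2}(\rn)]^\ell$ whose elements satisfy $(S_1)$ and $(S_2)$. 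Substituting this ansatz into \eqref{eq:system_rn} collapses all $\ell$ equations into the single scalar equation
\[
-\Delta u=|u|^{2p-2}u+\beta\,|u|^{p-2}u\sum_{k=1}^{\ell-1}|u\circ\vr_\ell^{\,k}|^p,\qquad u\in D^{1,2}_G(\rn),
\]
whose solutions are precisely the critical points of the associated reduced energy $\Phi_\beta:D^{1,2}_G(\rn)\to\r$.

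\textbf{Variational setup.} Next I would introduce the Nehari set
\[
\cN_\beta:=\{u\in D^{1,2}_G(\rn)\setminus\{0\}:\langle\Phi_\beta'(u),u\rangle=0\}
\]
and carry out the standard Nehari fibration analysis. Taking any nonzero $G$-invariant bump $u$ whose support and the supports of $u\circ\vr_\ell,\ldots,u\circ\vr_\ell^{\,\ell-1}$ are pairwise disjoint gives a point of $\cN_\beta$ after scaling, hence $\cN_\beta$ is nonempty; and from the Nehari identity, $\Phi_\beta$ restricted to $\cN_\beta$ reduces to a positive multiple of $\|\nabla u\|_{L^2}^2$, so it is bounded below by a positive constant via the Sobolev inequality. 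Set $c_\beta:=\inf_{\cN_\beta}\Phi_\beta>0$.

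\textbf{Compactness.} This is the main obstacle, because of the critical Sobolev exponent. For a minimizing (Ekeland) Palais--Smale sequence $(u_n)\subset\cN_\beta$, which is bounded in $D^{1,2}_G$, the plan is to use the $G$-symmetry to confine possible concentration to the fixed-point set $\{0\}\times\r^{N-4}$. Indeed, by the concentration-compactness principle of Lions, any defect measure of $(|u_n|^{2p}\d x)$ is $G$-invariant; but the $G$-orbit of any point $(z,y)$ with $z\ne 0$ has positive one-dimensional Hausdorff measure, so a point-mass at such a point would contradict finite total mass. Since $\vr_\ell$ also fixes $\{0\}\times\r^{N-4}$ pointwise, a Struwe-type global compactness decomposition forces the limit bubble at any remaining concentration point to satisfy the scalar equation $-\Delta U=\bigl(1+(\ell-1)\beta\bigr)|U|^{2p-2}U$, yielding a well-defined concentration threshold $c^*>0$. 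To exclude this obstruction I would construct a competitor in $\cN_\beta$ by $G$-symmetrizing a narrow Aubin--Talenti bubble centered at $(z_0,y_0)$ with $z_0\ne 0$ sufficiently small so that its $\ell$ $\vr_\ell$-images have pairwise disjoint supports (whence the coupling terms vanish identically), and verify by a direct computation that its energy lies strictly below $c^*$. This upgrades the weak convergence $u_n\rightharpoonup u$ to strong convergence, producing a nontrivial minimizer of $\Phi_\beta$ on $\cN_\beta$, equivalently a least energy pinwheel solution of \eqref{eq:system_rn}.

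\textbf{Nonnegativity.} To finish, one notes that $|u|\in D^{1,2}_G(\rn)$, $\|\nabla|u|\|=\|\nabla u\|$, and all the remaining terms of $\Phi_\beta$ depend only on $|u|$; hence $\Phi_\beta(|u|)\le\Phi_\beta(u)$, so $|u|$ is also a minimizer. The corresponding pinwheel solution with components $u_j=|u|\circ\vr_\ell^{\,j-1}$, $j=1,\ldots,\ell$, is then componentwise nonnegative, as asserted.
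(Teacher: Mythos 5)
Your scalar reduction via $\bf u=(u,\,u\circ\vr_\ell,\ldots,\,u\circ\vr_\ell^{\ell-1})$ is valid (it is exactly the paper's space $\mathscr{D}(\rn)$), the observation that $G$-invariance forces atomic defect measures onto $\{0\}\times\r^{N-4}$ is correct, and the $|u|$ argument for nonnegativity matches the paper. The compactness step, however, contains a genuine error.

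The claim that a limit bubble at a concentration point $\xi\in\{0\}\times\r^{N-4}$ solves $-\Delta U=(1+(\ell-1)\beta)|U|^{2p-2}U$ is not justified and is in general false. Since both $G$ and $\vr_\ell$ are \emph{linear} and fix $\xi$, the rescaling $w_{i,k}(x):=\eps_k^{(N-2)/2}u_{i,k}(\eps_k x+\xi)$ preserves both $(S_1)$ and $(S_2)$ exactly; hence the rescaled limit $\bf W=(W_1,\dots,W_\ell)$ is itself a pinwheel solution with $W_{i+1}=W_i\circ\vr_\ell$, \emph{not} a synchronized profile $W_1=\cdots=W_\ell$. (The scalar equation you wrote would require $W_1$ to be $\vr_\ell$-invariant, and there is no reason for that.) Consequently your threshold $c^*$ is just $c(\rn)$: there is no strict energy gap to close with a competitor, and concentration at $\{0\}\times\r^{N-4}$ cannot be "excluded". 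What is actually true — and what the paper proves — is that the concentration \emph{produces} the minimizer: one must extract scales $\eps_k$ and translations $\xi_k\in\{0\}\times\r^{N-4}$, show via a Lions-type nonvanishing argument that the rescaled weak limit $\bf W$ is nontrivial (cf. \eqref{desi}, Lemma \ref{lem:G}, and the argument following \eqref{des1}), and conclude by energy balance that $\bf W\in\cN(\rn)$ with $\cJ(\bf W)=c(\rn)$ and that convergence is strong. As written, your argument only succeeds when the weak limit $u$ is already nontrivial, which in a dilation-invariant problem is precisely what cannot be assumed. A further minor issue: "$G$-symmetrizing a narrow Aubin--Talenti bubble centered at $(z_0,y_0)$ with $z_0\neq 0$" does not produce a function of energy close to $S^{N/2}$ once the $G$-orbit of $(z_0,y_0)$ has positive diameter; the paper's Lemma \ref{lem:nehari}$(i)$ instead uses $G$-invariant bumps supported near disjoint $G$-orbits.

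There is also a structural difference worth noting. You minimize directly in $\rn$, which in principle avoids the unique continuation theorem of \cite{chs} (the half-space scenario cannot arise). The paper instead minimizes in $B_1$ — legitimate since $c(B_1)=c(\rn)$ by Proposition \ref{prop:no_minimum}$(i)$ — which bounds the translations but forces one to rule out minimizers in $B_1$ and in a half-space; both are dispatched by Proposition \ref{prop:no_minimum}$(ii)$, i.e.\ by unique continuation. Your route could therefore be slightly more self-contained, but only after replacing the bubble-equation/threshold argument with the correct rescaling-and-energy-balance dichotomy described above.
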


The precise meaning of least energy pinwheel solution is given in Definition \ref{def:le}. 

Note that the system \eqref{eq:system_rn} is invariant under dilations and translations. So, if $\eps>0$, $\xi\in\{0\}\times\r^{N-4}$ and $\bf u=(u_1,\ldots,u_\ell)$ is a pinwheel solution, then $\bf w=(w_1,\ldots,w_\ell)$ with $w_i(x):=\eps^\frac{N-2}{2}u_i(\eps x+\xi)$  is also a pinwheel solution with the same energy value as $\bf u$. This fact produces a lack of compactness of the variational functional associated with the system. Also, the components of least energy solutions to \eqref{eq:system_rn} could blow up as $\beta\to-\infty$. Therefore, to obtain an optimal partition, the solutions must be properly reparameterized beforehand. We prove the following result.

\begin{theorem} \label{thm:main2} 
Assume $N\geq 4$. Let $\beta_k<0$ satisfy $\beta_{k}\rightarrow -\infty$ and $\bf u_k=(u_{1,k},\ldots,u_{\ell,k})$ be a nontrivial least energy pinwheel solution to the system \eqref{eq:system_rn} with $\beta=\beta_{k}$ such that $u_{i,k}\geq 0$. Then, after passing to a subsequence, there exist $\eps_k\in(0,\infty)$ and $\xi_k\in\{0\}\times\r^{N-4}$ such that the function $\bf w_k=(w_{1,k},\ldots,w_{\ell,k})$ given by
$$w_{i,k}(x):=\eps_k^\frac{N-2}{2}u_{i,k}(\eps_kx+\xi_k),\qquad i=1,\ldots,\ell,$$
is a nontrivial least energy pinwheel solution to the system \eqref{eq:system_rn} for $\beta=\beta_{k}$ that satisfies
	\begin{enumerate}		
		\item [$(i)$] $\bf w_{k}\rightarrow \bf w_{\infty}=(w_{1,\infty},\dots, w_{\ell,\infty})$ strongly in $(D^{1,2}(\rn))^\ell$, \ $w_{j,\infty}\neq 0$ and $w_{j,\infty}\geq 0$ for every $j$, and
		$$ w_{j,\infty}w_{i,\infty}=0 \quad  \mbox{and} \quad	\lim_{k\to\infty}\int_{\mathbb{R}^{N}}\beta_{k}w_{i,k}^{p}w_{j,k}^{p}=0 \quad \mbox{ whenever \ } i\neq j. $$ 
		\item [$(ii)$] $w_{j,\infty}\in \cC^{0}(\mathbb{R}^{N})$ and the restriction of $w_{j,\infty}$ to the set $\Omega_{j}:=\{x\in\mathbb{R}^{N}: w_{j,\infty}>0 \}$ is a least  energy solution to the problem \eqref{eq:critical_problem} with $G$ as in \eqref{eq:G}.
		\item [$(iii)$] $\mathbb{R}^{N}\smallsetminus \bigcup_{j=1}^{\ell}\Omega_{j}=\mathscr{R}\cup \mathscr{S}$, where $\mathscr{R}\cap \mathscr{S}=\emptyset$, $\mathscr{R}$ is an $(N-1)$-dimensional $\cC^{1,\alpha}$-submanifold of $\mathbb{R}^{N}$ and $\mathscr{S}$ is a closed subset of  $\mathbb{R}^{N}$ with Hausdorff measure $\leq N-2$. Moreover, if $\xi\in \mathscr{R}$, there exist $i,j$ such that 
		\begin{equation*}
			\lim_{x\rightarrow \xi^{+}}|\nabla w_{i,\infty}(x)|= \lim_{x\rightarrow \xi^{-}}|\nabla w_{j,\infty}(x)|\neq 0,
		\end{equation*}
		where $x\rightarrow \xi^{\pm}$ are the limits taken from opposite sides of $ \mathscr{R}$ and, if $\xi\in \mathscr{S}$, then 
		\begin{equation*}
			\lim_{x\rightarrow \xi}|\nabla w_{j,\infty}(x)|= 0 \quad \mbox{ for every } \ j=1,\dots, \ell.
		\end{equation*}	
		\item[$(iv)$] $(\Omega_{1}, \dots, \Omega_{\ell})$ is an optimal $\ell$-pinwheel partition for the Yamabe problem \eqref{eq:yamabe0}.
		\item [$(v)$] If $\ell=2$, the function $\widehat{w}=w_{1,\infty}-w_{2,\infty}$ is a sign-changing solution to the Yamabe problem \eqref{eq:yamabe0} that satisfies 
		\begin{equation*}
\widehat{w}(gx)=\widehat{w}(x) \quad\text{and}\quad \widehat{w}(\vr_2x)= -\widehat{w}(x) \qquad\text{for all \ }g\in G, \ x\in\rn,
		\end{equation*}
with $G$ and $\vr_2$ as defined in \eqref{eq:G} and \eqref{eq:rho}. Furthermore, $\widehat{w}$ has least energy among all nontrivial solutions to \eqref{eq:yamabe0} with these properties.
	\end{enumerate}
\end{theorem}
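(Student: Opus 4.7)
The overall strategy is to use the dilation and translation invariance of the system along $\{0\}\times\r^{N-4}$ (the axis fixed by both $G$ and $\vr_\ell$) to renormalize $\mathbf{u}_k$ so that no blow-up or vanishing occurs, then to pass to a strongly convergent subsequence, and finally to verify the free-boundary structure and optimality characteristic of a pinwheel partition. The argument follows the general scheme of \cite{ctv}, with the added challenge of preserving the pinwheel symmetries throughout.

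First, I observe that dilations centered at any point of $\{0\}\times\r^{N-4}$ and translations along that subspace commute with both the $G$-action \eqref{eq:G} and $\vr_\ell$. Hence any such rescaling of a pinwheel solution remains a pinwheel solution with the same energy, so by Theorem \ref{thm:existence} the rescaled $\mathbf{w}_k$ is itself a least-energy pinwheel solution for $\beta_k$. A Lions-type concentration function argument then selects $\eps_k>0$ and $\xi_k\in\{0\}\times\r^{N-4}$ such that
\begin{equation*}
\max_{i}\,\sup_{\xi\in\rn}\int_{B_1(\xi)} w_{i,k}^{2^*}\d x = \int_{B_1(0)} w_{1,k}^{2^*}\d x = c_0
\end{equation*}
for a fixed constant $c_0$ below the Sobolev concentration threshold. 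A $\beta$-independent upper bound on the pinwheel energy, obtained by inserting $\ell$ pairwise-disjointly-supported Aubin--Talenti bubbles arranged by $\vr_\ell$ as a pinwheel competitor, yields a uniform $D^{1,2}$-bound on $\mathbf{w}_k$. After extraction, $\mathbf{w}_k\rh\mathbf{w}_\infty$ weakly in $(D^{1,2}(\rn))^\ell$ with $\mathbf{w}_\infty$ still satisfying $(S_1)$ and $(S_2)$ and $w_{j,\infty}\geq 0$.

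Testing the $i$-th equation in \eqref{eq:system_rn} with $w_{i,k}$ and summing yields
\begin{equation*}
\sum_{i}\|w_{i,k}\|_{D^{1,2}}^{2}+|\beta_k|\sum_{i\neq j}\int_{\rn}w_{i,k}^{p}w_{j,k}^{p}\d x = \sum_{i}\int_{\rn}w_{i,k}^{2p}\d x.
\end{equation*}
The uniform bounds, combined with an argument analogous to \cite{ctv}, force $|\beta_k|\int w_{i,k}^{p}w_{j,k}^{p}\to 0$ for $i\neq j$, so $w_{i,\infty}w_{j,\infty}=0$ a.e.\ when $i\neq j$. The cyclic identity $(S_2)$ forces $\|w_{j,\infty}\|_{D^{1,2}}=\|w_{1,\infty}\|_{D^{1,2}}$ for every $j$, so either every component vanishes or none does; the normalization from Step 1 together with the strict positivity of the pinwheel energy level excludes the vanishing alternative. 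Combining the segregation estimate with the Brezis--Lieb lemma then upgrades weak convergence to strong convergence in $(D^{1,2}(\rn))^\ell$, proving (i).

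Interior elliptic regularity gives smoothness of each $w_{j,\infty}$ on $\Omega_j:=\{w_{j,\infty}>0\}$, where it solves the critical equation in \eqref{eq:critical_problem}. The $\cC^{1,\alpha}$-structure of the regular interface $\mathscr{R}$, the Hausdorff bound on the singular set $\mathscr{S}$, and the matching/vanishing of $|\nabla w_{j,\infty}|$ in (iii) are obtained by importing the free-boundary regularity theory for segregated limits of strongly competitive elliptic systems (see \cite{ctv} and \cite{cpt}); this is the main technical obstacle of the proof, as the critical exponent demands the a priori $L^\infty$-bounds that are provided precisely by the concentration-controlled normalization of Step 1. The least-energy character of $w_{j,\infty}|_{\Omega_j}$ in (ii) and the optimality (iv) are then proved simultaneously by contradiction: given any $(G,\vr_\ell,\ell)$-partition $(\widetilde\Omega_1,\ldots,\widetilde\Omega_\ell)$ with least-energy solutions $\tilde u_i$, gluing the $\tilde u_i$'s via $\vr_\ell$ into a family of test configurations for \eqref{eq:system_rn} and passing to the limit yields $\sum_i J(w_{i,\infty}|_{\Omega_i})\leq\sum_i J(\tilde u_i)$. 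For (v), when $\ell=2$ the rotation $\vr_2$ interchanges $\Omega_1$ and $\Omega_2$; the gradient matching across $\mathscr{R}$ from (iii) ensures that $\widehat w:=w_{1,\infty}-w_{2,\infty}$ is a $\cC^1$ weak solution to \eqref{eq:yamabe0}, its invariance and antisymmetry under $G$ and $\vr_2$ follow from $(S_1)$--$(S_2)$ together with $\vr_2^{2}=\mathrm{id}$, and its least-energy character among solutions with these symmetries is a direct reformulation of (iv) in the single-equation language.
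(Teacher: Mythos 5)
Your proposal follows the same overall scheme as the paper's proof (renormalize so the concentration parameters respect the pinwheel symmetry, extract a segregated strong limit, import the free-boundary regularity of \cite{cpt}, and compare energies), so there is no structural divergence. There is, however, one genuine gap in the renormalization step, and a couple of places where the argument is glossed over in a way that hides real work.

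The gap: you assert that a ``Lions-type concentration function argument selects $\eps_k>0$ \emph{and} $\xi_k\in\{0\}\times\r^{N-4}$'' so that after rescaling the Lions sup is attained at $B_1(0)$. This cannot be achieved directly. A concentration-compactness argument for $u_{1,k}$ produces some $\zeta_k\in\rn$ achieving the sup, with \emph{no reason} for $\zeta_k$ to lie on the axis $\{0\}\times\r^{N-4}$. To be able to translate by a point fixed by both $G$ and $\vr_\ell$, you must show that $\zeta_k$ stays within $O(\eps_k)$ of that axis. This is precisely Lemma \ref{lem:G} in the paper, and it is where the specific structure of $G$ is used: if $\eps_k^{-1}\,\dist(\zeta_k,\{0\}\times\r^{N-4})\to\infty$, then for any $m$ one can find $g_1,\dots,g_m\in G$ with $\eps_k^{-1}|g_i\zeta_k-g_j\zeta_k|\to\infty$ (because the $G$-orbit of a point far from the axis has size comparable to its distance from it), producing $m$ pairwise disjoint balls of mass $\delta$ each by $G$-invariance, contradicting the uniform $D^{1,2}$-bound. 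Only after this dichotomy is resolved can one project $\zeta_k$ to $\xi_k\in\{0\}\times\r^{N-4}$; and even then one does not get the Lions sup attained at $B_1(0)$, but only that $\int_{B_{C_0+1}(0)}|w_{1,k}|^{2p}\geq\delta$, which is still enough to rule out $\mathbf{w}_\infty=0$. Without this symmetry-orbit argument, the claim that the renormalization preserves the pinwheel class is unsupported.

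Two smaller remarks. For the strong convergence in (i), invoking ``Brezis--Lieb'' is not quite how the paper closes the argument: there one tests $\partial_i\cJ_k(\mathbf{w}_k)$ against $w_{i,\infty}$ to get $\|w_{i,\infty}\|^2\leq|w_{i,\infty}|_{2p}^{2p}$, rescales $\mathbf{w}_\infty$ by $t\leq 1$ into $\cW_\ell$, and squeezes the energies between $\mathfrak{c}_\infty^\ell$ and $\liminf\frac1N\|\mathbf{w}_k\|^2\leq\mathfrak{c}_\infty^\ell$; this forces $t=1$ and the strong convergence simultaneously, and also yields the membership $\mathbf{w}_\infty\in\cW_\ell$ needed later. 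For (v), your route through the gradient-matching in (iii) to conclude $\widehat w$ is a weak solution is workable but heavier than necessary; the paper uses the purely variational correspondence $v\mapsto(v^+,-v^-)$ between $\mathcal{M}_\infty^{\phi_2}$ and $\cW_2$, under which the minimizer in $\cW_2$ becomes a minimizer on a natural constraint for $J$ in the equivariant space, hence a solution, without appealing to the free-boundary structure at all. Finally, the uniform $L^\infty$-bound you need to run the regularity theory in (ii)--(iii) does not ``come from the concentration-controlled normalization''; it is a separate Moser-type iteration (Lemma \ref{maincont} in the paper) that uses the strong $D^{1,2}$-convergence already established in (i).
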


Many different kinds of sign-changing solutions are known for the Yamabe problem \eqref{eq:yamabe0}, see \cite{bsw, c, cpw, d, dmpp, fp}. The one given by Theorem \ref{thm:main2}$(v)$ was obtained in \cite{c}. The following result provides infinitely many solutions of this last type. It is a consequence of \cite[Corollary 3.4]{c}.

\begin{theorem}\label{thm:nodal}
Let $N\geq 4$.
\begin{itemize}
\item[$(i)$] For each $\ell\in\n$ there exists a sign-changing solution $\widehat{w}_\ell$ of the Yamabe problem \eqref{eq:yamabe0} such that
\begin{equation*}
\widehat{w}_\ell(gx)=\widehat{w}_\ell(x) \quad\text{and}\quad \widehat{w}_\ell(\vr_{\ell} x)= -\widehat{w}_\ell(x) \qquad\text{for all \ }g\in G, \ x\in\rn,
\end{equation*}
with $G$ and $\vr_{\ell}$ as defined in \eqref{eq:G} and \eqref{eq:rho}. Furthermore, $\widehat{w}_\ell$ has least energy among all nontrivial solutions to \eqref{eq:yamabe0} with these properties.
\item[$(ii)$] If $\ell=nm$ and $n$ is even, then $\widehat{w}_\ell$ and $\widehat{w}_m$ are different and one is not obtained by translation and dilation of the other. In particular,
$$\widehat{w}_1,\,\widehat{w}_2,\ldots,\widehat{w}_{2^k},\ldots$$
is an infinite sequence of sign-changing solutions of \eqref{eq:yamabe0} that are not equivalent under translation and dilation.
\end{itemize}
\end{theorem}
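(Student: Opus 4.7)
For part (i), my plan is to invoke \cite[Corollary 3.4]{c} directly. That statement is an abstract variational principle producing a least-energy sign-changing solution of \eqref{eq:yamabe0} in the class of $G$-invariant functions antisymmetric under an isometry $\sigma$, provided the induced action on $D^{1,2}(\rn)$ restores enough compactness. In our setting one takes $G$ as in \eqref{eq:G} and $\sigma:=\vr_\ell$: the fixed-point set of $G$ is $\{0\}\times\r^{N-4}$, the $G$-orbits outside of it are circles (in particular infinite), and the antisymmetry $u\circ\vr_\ell=-u$ excludes concentration along the $G$-fixed subspace. Together these ingredients supply the symmetric Palais--Smale condition at the ground-state level and, via the principle of symmetric criticality, allow one to minimize the Yamabe energy over the appropriate symmetric Nehari constraint. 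The minimizer $\widehat{w}_\ell$ is then a weak solution of \eqref{eq:yamabe0}, and the relation $\widehat{w}_\ell\circ\vr_\ell=-\widehat{w}_\ell$ forces it to change sign.

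For part (ii), the crucial algebraic observation is that $\vr_\ell$ acts on the $\cc^2$-factor as $\cos(\pi/\ell)\,I+\sin(\pi/\ell)\,\tau$ with $\tau^2=-I$, so iteration gives
$$\vr_\ell^k(z,y)=\Bigl(\cos\tfrac{k\pi}{\ell}\,z+\sin\tfrac{k\pi}{\ell}\,\tau z,\;y\Bigr),$$
and when $\ell=nm$ this specializes to $\vr_\ell^n=\vr_m$. Since $n$ is even, iterating the antisymmetry $\widehat{w}_\ell\circ\vr_\ell=-\widehat{w}_\ell$ yields $\widehat{w}_\ell\circ\vr_m=(-1)^n\widehat{w}_\ell=\widehat{w}_\ell$, so $\widehat{w}_\ell$ is $\vr_m$-\emph{invariant}, whereas $\widehat{w}_m$ is $\vr_m$-antisymmetric by construction. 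These two symmetry classes meet only at zero, hence $\widehat{w}_\ell\neq\widehat{w}_m$.

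To upgrade this to inequivalence under translation and dilation, I argue by contradiction. Suppose $\widehat{w}_\ell(x)=\mu^{(N-2)/2}\widehat{w}_m(\mu x+\xi)$ for some $\mu>0$ and $\xi\in\rn$. Imposing $G$-invariance on both sides and using the $G$-invariance of $\widehat{w}_m$, a short rearrangement gives $\widehat{w}_m(y)=\widehat{w}_m(y+\xi-g\xi)$ for every $y\in\rn$ and $g\in G$; since $\widehat{w}_m\in D^{1,2}(\rn)$ cannot be invariant under any nontrivial group of translations, this forces $g\xi=\xi$ for all $g\in G$, i.e., $\xi\in\{0\}\times\r^{N-4}$. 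On this subspace $\vr_m$ acts as the identity, so it commutes with the affine change of variables $x\mapsto\mu x+\xi$, and the $\vr_m$-antisymmetry of $\widehat{w}_m$ transfers verbatim to $\widehat{w}_\ell$, contradicting the $\vr_m$-invariance established above. The announced infinite sequence then follows by applying this with $\ell=2^k$, $m=2^j$, $n=2^{k-j}$ for any pair $k>j$, so that $n$ is even.

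Assuming \cite[Corollary 3.4]{c} can be quoted as a black box, the main obstacle lies in (i) and consists in verifying the symmetric compactness that makes the black box applicable in our specific pinwheel setup; once the existence of $\widehat{w}_\ell$ is accepted, part (ii) reduces to the identity $\vr_\ell^n=\vr_m$ and the fixed-point/rigidity argument sketched above.
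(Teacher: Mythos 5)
Your proposal follows the same route as the paper: part~(i) is obtained by invoking \cite[Corollary 3.4]{c} for the group generated by $G$ and $\vr_\ell$ together with the homomorphism sending $G$ to $1$ and $\vr_\ell$ to $-1$, after checking that $\vr_\ell$ has order $2\ell$ and that the orbits outside $\{0\}\times\r^{N-4}$ are infinite; part~(ii) is the same parity argument based on $\vr_\ell^n=\vr_m$ and $n$ even. The only genuine difference is that you spell out why a putative relation $\widehat{w}_\ell=\mu^{(N-2)/2}\widehat{w}_m(\mu\cdot+\xi)$ forces $\xi$ into the $G$-fixed subspace $\{0\}\times\r^{N-4}$ (where $\vr_m$ acts as the identity), which justifies that the translated/dilated function inherits the $\vr_m$-antisymmetry; the paper states ``it suffices to show that $u\neq v$'' for functions with the two antisymmetries without making this reduction explicit, so your version closes a small gap in the exposition while remaining essentially the paper's proof.
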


These solutions are different from those given by Ding in \cite{d} whose nodal regions are not homeomorphic to each other, see \cite{css}. They are also different from those found by del Pino, Musso, Pacard and Pistoia in \cite{dmpp}, which look like a positive bubble centered at the origin surrounded by a large number of highly dilated negative bubbles placed on the vertices of a regular polygon.

Solutions to the Yamabe system \eqref{eq:system_rn} that are invariant under the conformal action of $O(m)\times O(n)$, $m+n=N+1$, were found in \cite{cp2,cs}, and the behavior, as $\beta\to-\infty$, of least energy solutions with this property was described in \cite{cp2,css}. Solutions to \eqref{eq:system_rn} resembling those given in \cite{dmpp} for the single equation were recently obtained in \cite{glw,cmp} using a Ljapunov-Schmidt reduction procedure. This procedure can only be applied in dimensions $N=3,4$, because the coupling terms have sublinear growth if $N\geq 5$. However, none of these solutions are pinwheel solutions.

Pinwheel solutions were first found by Wei and Weth in \cite{ww} who studied a subcritical Schrödinger system of two equations in dimensions $N=2,3$. Pinwheel solutions for subcritical Schrödinger systems have been also exhibited, for instance, in \cite{cp, pv, pw}.

The proof of Theorem \ref{thm:existence} is based on a concentration compactness argument (see Theorem \ref{thm:minimizing}) that describes the behavior of minimizing pinwheel sequences for the system in the unit ball $B_1$ in $\rn$. It yields the existence of a least energy pinwheel solution, either in $B_1$, or in a half space, or in all of $\rn$. The first two cases are discarded by means of a unique continuation principle for systems proved in \cite{chs}.

\begin{questions}
Some interesting questions remain open. For instance,
\begin{enumerate}
\item[$(1)$] If $\bf w_{\infty}=(w_{1,\infty},\dots, w_{\ell,\infty})$ is the function given by \emph{Theorem} \ref{thm:main2}, is it true that $\sum_{i=1}^\ell (-1)^{i+1}w_{i,\infty}$ is a sign-changing solution of the Yamabe problem \eqref{eq:yamabe0}?
\item[$(2)$] Does the sign-changing solution $\widehat{w}_\ell$ of \eqref{eq:yamabe0} given by \emph{Theorem} \ref{thm:nodal} have precisely $\ell$ nodal domains? And, if so, do the nodal domains form an $\ell$-pinwheel partition for \eqref{eq:yamabe0}?
\end{enumerate}
\end{questions}

The paper is organized as follows. In Section \ref{sec:variational_setting} we describe our variational symmetric setting. In Section \ref{sec:existence} we study the behavior of minimizing sequences in the unit ball and use it to prove Theorem \ref{thm:existence}. Sections \ref{sec:optimal_partition} and \ref{sec:nodal} are devoted to the proofs of Theorems \ref{thm:main2} and \ref{thm:nodal} respectively.

\section{The variational setting}
\label{sec:variational_setting}

Let $\o$ be a domain in $\rn$, $N\geq 4$, $\beta<0$ and $p:=\frac{N}{N-2}$, and consider the system
\begin{equation} \label{eq:system}
\begin{cases}
-\Delta u_i = |u_i|^{2p-2}u_i+\sum\limits_{\substack{j=1\\j\neq i}}^\ell\beta|u_j|^p|u_i|^{p-2}u_i, \\
u_i\in D_0^{1,2}(\o),\qquad i=1,\ldots,\ell.
\end{cases}
\end{equation}
 Let
$$\langle u,v\rangle:=\io\nabla u\cdot\nabla v\qquad\text{and}\qquad\|u\|:=\sqrt{\langle u,u\rangle}$$
be the inner product and the norm of $u,v\in D_0^{1,2}(\o)$, and $|u|_{2p}$ be the norm of $u$ in $L^{2p}(\o)$. The solutions $\bf u=(u_1,\ldots,u_\ell)$ to \eqref{eq:system} are the critical points of the functional $\cJ:(D_0^{1,2}(\o))^\ell\to\r$ given by 
$$\cJ(\bf u):= \frac{1}{2}\sum_{i=1}^\ell\|u_i\|^2 - \frac{1}{2p}\sum_{i=1}^\ell |u_i|_{2p}^{2p}-\frac{\beta}{2p}\sum_{\substack{i,j=1 \\ i\neq j}}^\ell\io |u_i|^p|u_j|^p,$$
which is of class $\cC^1$. Its $i\text{-th}$ partial derivative is
$$\partial_i\cJ(\bf u)v=\langle u_i,v\rangle-\io|u_i|^{2p-2}u_iv-\beta\sum\limits_{\substack{j=1\\j\neq i}}^\ell\io|u_j|^p|u_i|^{p-2}u_iv$$
for any $\bf u\in(D_0^{1,2}(\o))^\ell, \ v\in D_0^{1,2}(\o)$.

We are interested in pinwheel solutions to \eqref{eq:system} for suitable domains $\o$. The variational setting that produces such solutions is obtained through the successive application of the principle of symmetric criticality to the symmetries described below.

\subsection{The action of the complex unit circle}

Let $G$ be the set of complex numbers on the unit circle $\s1$ acting on $\rn\equiv\cc\times\cc\times\r^{N-4}$ as follows:
\begin{equation}\label{eq:s1action}
gx:=(gz_1,\overline{g}z_2,y)\quad\text{for every \ }g \in\s1\text{ \ and \ }x=(z_1,z_2,y)\in\cc\times\cc\times\r^{N-4},	
\end{equation}
where $\overline{g}$ is the complex conjugate of $g$. Then $G$ is a subgroup of $O(N)$.

Let $\o$ be $G$-invariant, that is, that $gx\in\o$ for every $g\in G$ and $x\in\o$. If $g\in G$ and $u\in D_0^{1,2}(\o)$ we define $gu\in D_0^{1,2}(\o)$ by $gu(x):=u(g^{-1}x)$. This gives an isometric action of $G$ on $(D_0^{1,2}(\o))^\ell$ by taking
$$g\bf u:=(gu_1,\ldots,gu_\ell),\quad\text{if \ } g\in G\text{ \ and \ }\bf u=(u_1,\ldots,u_\ell)\in (D_0^{1,2}(\o))^\ell.$$
The $G$-fixed point space of $(D_0^{1,2}(\o))^\ell$ is the subspace
\begin{align*}
\cD(\o):&=\{\bf u\in (D_0^{1,2}(\o))^\ell:g\bf u=\bf u \text{ for all }g\in G\}\\
&=\{\bf u\in (D_0^{1,2}(\o))^\ell:u_j\text{ is }G\text{-invariant for every }j=1,\ldots,\ell\}.
\end{align*}
The functional $\cJ$ is $G$-invariant. So, by the principle of symmetric criticality \cite[Theorem 1.28]{w}, the critical points of the restriction of $\cJ$ to $\cD(\o)$ are the solutions to the system \eqref{eq:system} satisfying $(S_1)$ for this group $G$.

\subsection{The action of the cyclic group}

Let $\tau:\cc^2\to\cc^2$ be given by $\tau(z_1,z_2):=(-\overline{z}_2,\overline{z}_1)$ and, for each $j\in\n$, define $\vr_\ell^j\in O(N)$ by
\begin{equation}\label{eq:eq:vrj}
\vr_\ell^jx:=\left(\Big(\cos\frac{\pi j}{\ell}\Big)z+\Big(\sin\frac{\pi j}{\ell}\Big)\tau z,\,y\right)\qquad\text{for every \ }x=(z,y)\in\cc^2\times\r^{N-4}.
\end{equation}
Since $\tau z$ is orthogonal to $z$ and $|\tau z|=|z|$, we have that $\vr_\ell^j\in O(N)$ and $\vr_\ell^k\vr_\ell^j=\vr_\ell^{k+j}$. Note also that $\vr_\ell^jg=g\vr_\ell^j$ for every $g\in G$ and every $j\in\n$. 

Set $\vr_\ell:=\vr_\ell^1$ and let $\Gamma_\ell$ be the subgroup of $O(N)$ generated by $G\cup\{\vr_\ell\}$. \emph{In the remainder of this section we will assume that $\o$ is $\Gamma_\ell$-invariant}.

We denote by $\sigma^j:\{1,\ldots,\ell\}\to\{1,\ldots,\ell\}$ the permutation $\sigma^j(r):=r+j \mod\ell$, \ $j=0,\ldots,\ell-1$.

\begin{proposition}
Let $\zl=\{0,\ldots,\ell-1\}$ be the additive group of integers modulo $\ell$.
\begin{itemize}
\item[$(a)$] For each $j\in\zl$ the function $\vr_\ell^j:\cD(\o)\to\cD(\o)$ given by
$$\vr_\ell^j\bf u(x):=(u_{\sigma^j(1)}(\vr_\ell^{-j}x),\ldots,u_{\sigma^j(\ell)}(\vr_\ell^{-j}x)),\quad\text{where \ }\bf u=(u_1,\ldots,u_\ell),$$
is well defined and it is a linear isometry.
\item[$(b)$] $j\mapsto\vr_\ell^j$ is a well defined action of $\zl$ on $\cD(\o)$.
\end{itemize}
\end{proposition}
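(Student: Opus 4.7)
The plan is to verify each assertion by a direct computation, exploiting two structural ingredients from the setup: the commutation $\vr_\ell^j g=g\vr_\ell^j$ for every $g\in G$ and $j\in\n$, and the observation that $\vr_\ell^\ell$ coincides, as an element of $O(N)$, with the action of $-1\in\s1\subset G$ on $\rn$ given by \eqref{eq:s1action}.

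For part (a), I would fix $j\in\{0,\ldots,\ell-1\}$ and $\bf u=(u_1,\ldots,u_\ell)\in\cD(\o)$, and first check that each component $u_{\sigma^j(i)}\circ\vr_\ell^{-j}$ lies in $D_0^{1,2}(\o)$: since $\vr_\ell^{-j}\in O(N)$ and $\o$ is $\Gamma_\ell$-invariant, composition with $\vr_\ell^{-j}$ is a linear isometry of $D_0^{1,2}(\o)$. Next I would verify that $G$-invariance is preserved, using the commutation $\vr_\ell^{-j}g=g\vr_\ell^{-j}$ (an immediate consequence of the stated one) together with the $G$-invariance of each $u_{\sigma^j(i)}$:
$$u_{\sigma^j(i)}(\vr_\ell^{-j}(gx))=u_{\sigma^j(i)}(g\vr_\ell^{-j}x)=u_{\sigma^j(i)}(\vr_\ell^{-j}x).$$
Hence $\vr_\ell^j\bf u\in\cD(\o)$. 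Linearity is manifest from the defining formula, and the isometry property reduces to
$$\|\vr_\ell^j\bf u\|^2=\sum_{i=1}^\ell\|u_{\sigma^j(i)}\circ\vr_\ell^{-j}\|^2=\sum_{i=1}^\ell\|u_{\sigma^j(i)}\|^2=\sum_{i=1}^\ell\|u_i\|^2,$$
using that composition with $\vr_\ell^{-j}\in O(N)$ preserves the Dirichlet norm and that $\sigma^j$ is a permutation of $\{1,\ldots,\ell\}$.

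For part (b), I would first establish the composition rule $\vr_\ell^j\circ\vr_\ell^k=\vr_\ell^{j+k}$ by unwinding the definition: the $i$-th component of $\vr_\ell^j(\vr_\ell^k\bf u)(x)$ equals $u_{\sigma^k(\sigma^j(i))}(\vr_\ell^{-k}\vr_\ell^{-j}x)$, which matches $u_{\sigma^{j+k}(i)}(\vr_\ell^{-(j+k)}x)$ because $\sigma^k\circ\sigma^j=\sigma^{j+k}$ and $\vr_\ell^{-k}\vr_\ell^{-j}=\vr_\ell^{-(j+k)}$.

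The step I expect to be the main obstacle is to show that this assignment descends to a well defined action of the cyclic quotient $\zl$; equivalently, that $\vr_\ell^\ell$ acts as the identity on $\cD(\o)$. This is not automatic from the ambient $O(N)$-level formula, since
$$\vr_\ell^\ell x=\bigl((\cos\pi)z+(\sin\pi)\tau z,\,y\bigr)=(-z,y)\neq x$$
in general. The resolution is to recognize $(-z,y)$ as the image of $x$ under $-1\in\s1$ acting via \eqref{eq:s1action}. Consequently, for any $G$-invariant $u_i$,
$$u_i(\vr_\ell^{-\ell}x)=u_i((-1)\cdot x)=u_i(x),$$
and since $\sigma^\ell$ is the identity permutation of indices, $\vr_\ell^\ell\bf u=\bf u$ on all of $\cD(\o)$. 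Combined with the composition rule, this yields the desired action of $\zl$ on $\cD(\o)$.
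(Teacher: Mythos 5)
Your argument is correct and matches the paper's proof in all essentials: part (a) via the commutation $\vr_\ell^j g=g\vr_\ell^j$ and the $G$-invariance of the components, and part (b) via the observation that $\vr_\ell^{\pm\ell}(z,y)=(-z,y)$ is the action of $-1\in\s1$, so that $G$-invariance forces $\vr_\ell^\ell$ to act as the identity on $\cD(\o)$. The only difference is that you spell out the composition rule $\vr_\ell^j\circ\vr_\ell^k=\vr_\ell^{j+k}$ and the identification $(-z,y)=(-1)\cdot(z,y)$ explicitly, whereas the paper leaves these implicit.
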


\begin{proof}
$(a):$ \ Let $\bf u\in\cD(\o)$ and $g\in G$. Since $u_j$ satisfies $(S_1)$ and $\vr_\ell^jg=g\vr_\ell^j$ we have
\begin{align*}
g[\vr_\ell^j\bf u](x)&=[\vr_\ell^j\bf u](g^{-1}x)\\
&=(u_{\sigma^j(1)}(\vr_\ell^{-j}g^{-1}x),\ldots,u_{\sigma^j(\ell)}(\vr_\ell^{-j}g^{-1}x))\\
&=(u_{\sigma^j(1)}(g^{-1}\vr_\ell^{-j}x),\ldots,u_{\sigma^j(\ell)}(g^{-1}\vr_\ell^{-j}x))\\
&=(u_{\sigma^j(1)}(\vr_\ell^{-j}x),\ldots,u_{\sigma^j(\ell)}(\vr_\ell^{-j}x))=\vr_\ell^j\bf u(x). 
\end{align*}
This shows that $\vr_\ell^j\bf u\in\cD(\o)$. The function $\vr_\ell^j:\cD(\o)\to\cD(\o)$ is clearly linear and bijective, and it satisfies $\|\vr_\ell^j\bf u\|=\|\bf u\|$.

$(b):$ \ $\vr_\ell^{-\ell}(z,y)=(-z,y)$ for every $(z,y)\in\cc^2\times\r^{N-4}$. So, if $\bf u\in\cD(\o)$, then
\begin{align*}
\vr_\ell^\ell\bf u(z,y)&=(u_{\sigma^\ell(1)}(\vr_\ell^{-\ell}( z,y)),\ldots,u_{\sigma^\ell(\ell)}(\vr_\ell^{-\ell}(z,y)))\\
&=(u_1(-z,y),\ldots,u_\ell(-z,y))=\bf u(-z,y)=\bf u(z,y).
\end{align*}
This shows that $\vr_\ell^{\ell}:\cD(\o)\to\cD(\o)$ is the identity. So $j\mapsto\vr_\ell^j$ is a well defined homomorphism from $\zl$ into the group of linear isometries of $\cD(\o)$.
\end{proof}

\begin{remark} \label{rem:1}
\emph{The $G$-orbit of $z=(z_1,z_2)\in\cc^2$ with respect to the action defined in \eqref{eq:s1action}, which, by definition, is the set
$$Gz:=\{(g z_1,\overline{g}z_2):g\in G\},$$
is the circle of radius $|z|$ contained in the plane $\{\alpha z + \beta\i z:\alpha,\beta\in\r\}$, where $\i$ is the imaginary unit. If $z\neq 0$, then
$$\tau z\in Gz \ \Longleftrightarrow \ \text{either \ }\i z_1=-\overline{z}_2\text{ \ or \ }\i z_1=\overline{z}_2.$$
So the $G$-orbits of $(z_1,\i\overline z_1)$ and $(z_1,-\i\overline z_1)$ are the only fixed points of the action of $\zl$ on the $G$-orbit space of $\cc^2\smallsetminus\{0\}$ given by $j\mapsto\vr_\ell^j$. We stress that there is no free action of $\zl$ on the $G$-orbit space of $\cc^2\smallsetminus\{0\}$ for $\ell>2$.}
\end{remark}
\medskip

The $\zl$-fixed point space of $\cD(\o)$ is the space
\begin{align*}
\mathscr{D}(\o):&=\cD(\o)^{\zl}=\{\bf u\in\cD(\o):\vr_\ell^j\bf u=\bf u\text{ \ for all \ }j\in\zl \} \\
&=\{\bf u\in (D_0^{1,2}(\o))^\ell:u_j\text{ is }G\text{-invariant and }u_{j+1}=u_j\circ\vr_\ell\text{ for every }j=1,\ldots,\ell\}.
\end{align*}
The functional $\cJ|_{\cD(\o)}:\cD(\o)\to\r$ is $\zl$-invariant. So, by the principle of symmetric criticality, the critical points of its restriction to $\mathscr{D}(\o)$ are the pinwheel solutions of the system \eqref{eq:system}. Abusing notation, we  write
\begin{equation}\label{eq:energy}
\cJ:=\cJ|_{\mathscr{D}(\o)}:\mathscr{D}(\o)\to\r.
\end{equation}
Note that
\begin{align*}
\cJ'(\bf u)\bf v=\sum_{i=1}^\ell\partial_i\cJ(\bf u)v_i=\ell\,\partial_j\cJ(\bf u)v_j\quad\text{for any \ }\bf u,\bf v\in\mathscr{D}(\o)\text{ and }j=1,\ldots,\ell.
\end{align*}
By condition $(S_2)$, if $\bf u\in\mathscr{D}(\o)$ and $\bf u\neq 0$, then every component of $\bf u$ is nontrivial. So the fully nontrivial critical points of $\cJ|_{\mathscr{D}(\o)}$ belong to the Nehari manifold
\begin{align}\label{eq:nehari}
\cN(\o)=\{\bf u\in\mathscr{D}(\o):\bf u\neq 0, \ \cJ'(\bf u)\bf u=0\}.
\end{align}
Set
$$c(\o):=\inf_{\bf u\in\cN(\o)}\cJ(\bf u).$$

\begin{lemma}\label{lem:nehari}
\begin{itemize}
	\item[$(i)$] $\cN(\o)\neq\emptyset$.
	\item[$(ii)$] If $\bf u=(u_{1},\dots,u_{\ell})\in\cN(\o)$, then 
	$$S^{N/2}\leq\|u_i\|^2\leq|u_i|_{2p}^{2p}\qquad\text{for every \ }i=1,\ldots,\ell,$$
	and $c(\o)\geq \frac{\ell}{N} S^{\frac{N}{2}}$, where $S$ is the best constant for the Sobolev embedding $D^{1,2}(\mathbb{R}^{N})\hookrightarrow L^{2p}(\mathbb{R}^{N})$. 
	\item[$(iii)$] $\cN(\o)$ is a closed $\cC^1$-submanifold of codimension $1$ of $\mathscr{D}(\o)$ and a natural constraint for $\cJ|_{\mathscr{D}(\o)}$.
\end{itemize}
\end{lemma}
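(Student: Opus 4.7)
The plan is to treat the three items in order, relying only on the pinwheel symmetric structure already set up and the standard Nehari machinery. For (i), I would construct a nontrivial element of $\mathscr D(\o)$ whose components have pairwise disjoint supports, reducing the Nehari scaling to a one-variable problem. By Remark \ref{rem:1}, the $\zl$-action on the $G$-orbit space of $\cc^2\setminus\{0\}$ is free on a dense set, so I would pick $x_0\in\o$ whose $G$-orbit has trivial $\zl$-stabilizer together with a small $G$-invariant neighborhood $A$ of $Gx_0$ in $\o$ for which $A,\vr_\ell A,\ldots,\vr_\ell^{\ell-1}A$ are pairwise disjoint. Choosing a nontrivial $G$-invariant $u_1\in D_0^{1,2}(\o)$ with $\supp u_1\subset A$ and setting $u_j:=u_1\circ\vr_\ell^{j-1}$ yields $\bf u\in\mathscr D(\o)$ with components of disjoint support. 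All coupling integrals vanish, so
$\cJ(t\bf u)=\tfrac{t^2}{2}\sum_i\|u_i\|^2-\tfrac{t^{2p}}{2p}\sum_i|u_i|_{2p}^{2p}$
admits a unique positive critical point $t_0$, and $t_0\bf u\in\cN(\o)$.

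For (ii), I would exploit the identity $\cJ'(\bf u)\bf v=\ell\,\partial_j\cJ(\bf u)v_j$ on $\mathscr D(\o)$: taking $\bf v=\bf u\in\cN(\o)$ gives $\partial_j\cJ(\bf u)u_j=0$ for every $j$, so
$$\|u_j\|^2=|u_j|_{2p}^{2p}+\beta\sum_{i\neq j}\io|u_i|^p|u_j|^p\leq|u_j|_{2p}^{2p},$$
since $\beta<0$ and the integrals are nonnegative. Combined with Sobolev $|u_j|_{2p}^{2p}\leq S^{-p}\|u_j\|^{2p}$, this yields $\|u_j\|^{2(p-1)}\geq S^p$, hence $\|u_j\|^2\geq S^{p/(p-1)}=S^{N/2}$, using $p=N/(N-2)$. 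Subtracting $\tfrac{1}{2p}\cJ'(\bf u)\bf u=0$ from $\cJ(\bf u)$ then gives
$$\cJ(\bf u)=\Bigl(\tfrac12-\tfrac1{2p}\Bigr)\sum_{i=1}^\ell\|u_i\|^2=\tfrac{1}{N}\sum_{i=1}^\ell\|u_i\|^2\geq\tfrac{\ell}{N}S^{N/2}.$$

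For (iii), I would write $\cN(\o)=\Psi^{-1}(0)\setminus\{0\}$ with $\Psi(\bf u):=\cJ'(\bf u)\bf u\in\cC^1(\mathscr D(\o),\r)$. A direct computation gives, on $\cN(\o)$,
$$\Psi'(\bf u)\bf u=2\sum_{i=1}^\ell\|u_i\|^2-2p\Bigl(\sum_{i=1}^\ell|u_i|_{2p}^{2p}+\beta\sum_{i\neq j}\io|u_i|^p|u_j|^p\Bigr)=(2-2p)\sum_{i=1}^\ell\|u_i\|^2,$$
which is strictly negative by (ii). So $\Psi$ is a submersion along $\cN(\o)$, making $\cN(\o)$ a $\cC^1$-submanifold of codimension $1$; its closedness in $\mathscr D(\o)$ follows because (ii) bounds its elements away from $0$. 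The natural-constraint property is the standard Lagrange-multiplier argument: if $\bf u$ is a critical point of $\cJ|_{\cN(\o)}$, then $\cJ'(\bf u)=\lambda\Psi'(\bf u)$ for some $\lambda\in\r$; evaluating at $\bf u$ gives $0=\cJ'(\bf u)\bf u=\lambda\Psi'(\bf u)\bf u$, forcing $\lambda=0$, whence $\cJ'(\bf u)=0$. The one genuinely delicate point is (i): showing that the given $\Gamma_\ell$-invariant $\o$ contains an orbit on which $\zl$ acts freely (needed to arrange the disjoint-support configuration), which Remark \ref{rem:1} guarantees for the domains relevant to the paper, namely $B_1$, half-spaces, and $\rn$.
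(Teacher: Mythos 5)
Your proof follows essentially the same route as the paper's: (i) is proved by the same construction of a pinwheel tuple with pairwise disjoint supports from an orbit with trivial $\zl$-stabilizer (the paper normalizes $u_1$ directly instead of scaling by $t_0$, a cosmetic difference), (ii) is the same Sobolev-plus-Nehari-constraint computation, and (iii) spells out the "standard argument" the paper leaves implicit. The one step you flag as delicate—existence of an orbit with trivial $\zl$-stabilizer in the given open $\o$—is handled identically in the paper via Remark~\ref{rem:1}, since the non-free locus has measure zero.
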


\begin{proof}
$(i):$ \ Let $x_0=(z_0,y_0)\in\o$ be such that $(\tau z_0,y_0)\notin Gx_0$. Since $\o$ is open, such a point always exists, see Remark \ref{rem:1}. Then $\vr_\ell^jx_0\notin Gx_0$ if $j=1,\ldots,\ell-1$. Hence, the $G$-orbits of $x_j:=\vr_\ell^jx_0$ and $x_0$ are disjoint. It follows that the $G$-orbits of the points $x_j$ with  $j=0,\ldots,\ell-1$ are pairwise disjoint. Fix $\eps>0$ such that $2\eps<\dist(Gx_j,Gx_i)$ for every pair $i\neq j$ and set $U_j:=\{x\in\rn:\dist(x,Gx_j)<\eps\}$. Then $U_j\subset\o$ and $U_j$ is $G$-invariant. Choose a nontrivial $G$-invariant function $u_1\in\cC_c^\infty(U_0)$ such that $\|u_1\|^2=|u_1|_{2p}^{2p}$ and, for $j=1,\ldots,\ell-1$, define $u_{j+1}(x):=u_1(\vr_\ell^{-j}x)$. Then, $u_{j+1}\in\cC_c^\infty(U_j)$, $\bf u=(u_1,\ldots,u_\ell)\in\mathscr{D}(\o)$ and, since any two components have disjoint supports, $\bf u\in\cN(\o)$. 

$(ii):$ \ If $\bf u=(u_{1},\dots,u_{\ell})\in  \cN(\o)$ then $\cJ'(\bf u)\bf u=0$ and, since $\beta<0$,
\begin{equation*}
\|u_{i}\|^{2}=\irn|u_i|^{2p}+\beta\sum\limits_{\substack{j=1\\j\neq i}}^\ell\irn|u_j|^p|u_i|^{p}\leq |u_i|_{2p}^{2p}.
\end{equation*}
Hence,
\begin{equation*}
	0<S\leq \frac{\|u_{i}\|^{2}}{|u_i|_{2p}^2}\leq\left( \|u_{i}\|^{2}\right)^{\frac{2p-2}{2p}} 
\end{equation*}
and
\begin{equation*}
\cJ(\bf u)=\frac{1}{N}\|\bf u\|^{2}\geq \frac{\ell}{N} S^{\frac{N}{2}}.
\end{equation*}
The statement $(ii)$ follows from these inequalities.

$(iii):$ \ A standard argument shows that $\cN(\o)$ is a closed $\cC^1$-submanifold of codimension $1$ of $\mathscr{D}(\o)$ and a natural constraint for $\cJ|_{\mathscr{D}(\o)}$. 
\end{proof}

\begin{definition}\label{def:le}
A function $\bf u\in\cN(\o)$ such that $\cJ(\bf u)=c(\o)$ is called a \textbf{least energy pinwheel solution} to the system \eqref{eq:system}.
\end{definition}

The following result is a special case of \cite[Theorem 1.5]{chs}. For completeness, we give the details.

\begin{proposition} \label{prop:no_minimum}
\begin{itemize}
\item[$(i)$]If $\o\cap(\{0\}\times\r^{N-4})\neq\emptyset$, then  $c(\o)=c(\rn)$.
\item[$(ii)$]If, in addition, $\rn\smallsetminus\o$ has nonempty interior, then the system \eqref{eq:system} does not have a least energy pinwheel solution.
\end{itemize}
\end{proposition}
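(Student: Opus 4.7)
The plan is to treat the two statements separately. For (i), the inequality $c(\o) \geq c(\rn)$ is immediate, since extending functions in $\mathscr{D}(\o)$ by zero embeds $\cN(\o)$ into $\cN(\rn)$. For the reverse inequality I would exploit the translation-and-dilation invariance of $\cJ$ together with a cutoff localized near a point fixed by the full symmetry group $\Gamma_\ell$.

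Fix $\xi_0\in\o\cap(\{0\}\times\r^{N-4})$. A direct check using \eqref{eq:s1action} and \eqref{eq:eq:vrj} shows that $g\xi_0=\xi_0$ for every $g\in G$ and $\vr_\ell\xi_0=\xi_0$. Hence, for any $\bf u\in\cN(\rn)$, the rescaled function $\bf u_\lambda(x):=\lambda^{\frac{N-2}{2}}\bf u(\lambda(x-\xi_0))$ still belongs to $\mathscr{D}(\rn)\cap\cN(\rn)$ with $\cJ(\bf u_\lambda)=\cJ(\bf u)$. Pick a smooth cutoff $\phi:[0,\infty)\to[0,1]$ with $\phi\equiv 1$ on $[0,1]$ and $\supp\phi\subset[0,2)$, and set $\phi_r(x):=\phi(|x-\xi_0|/r)$. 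Since $|g(x-\xi_0)|=|x-\xi_0|=|\vr_\ell(x-\xi_0)|$, the cutoff $\phi_r$ is $\Gamma_\ell$-invariant, and so $\bf v^\lambda:=\phi_r\bf u_\lambda\in\mathscr{D}(\o)$ provided $r>0$ is chosen with $\overline{B_{2r}(\xi_0)}\subset\o$. As $\lambda\to\infty$, the mass of $\bf u_\lambda$ concentrates at $\xi_0$, so $\sum_i\|v^\lambda_i\|^2\to\sum_i\|u_i\|^2$, $\sum_i|v^\lambda_i|_{2p}^{2p}\to\sum_i|u_i|_{2p}^{2p}$ and $\sum_{i\neq j}\irn|v^\lambda_i|^p|v^\lambda_j|^p\to\sum_{i\neq j}\irn|u_i|^p|u_j|^p$. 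Because $\bf u\in\cN(\rn)$ satisfies $\sum_i|u_i|_{2p}^{2p}+\beta\sum_{i\neq j}\irn|u_i|^p|u_j|^p=\sum_i\|u_i\|^2>0$, for large $\lambda$ there is a unique $t_\lambda>0$ with $t_\lambda\bf v^\lambda\in\cN(\o)$, and $t_\lambda\to 1$. Since $\cJ(\bf w)=\frac{1}{N}\sum_i\|w_i\|^2$ on the Nehari manifold (cf.\ the proof of Lemma \ref{lem:nehari}(ii)), we deduce $\cJ(t_\lambda\bf v^\lambda)\to\cJ(\bf u)$, and hence $c(\o)\leq\cJ(\bf u)$; taking the infimum over $\bf u\in\cN(\rn)$ gives $c(\o)\leq c(\rn)$.

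For (ii), assume that some $\bf u\in\cN(\o)$ attains $c(\o)=c(\rn)$. Extending $\bf u$ by zero yields an element of $\cN(\rn)$ with the same minimal energy, hence a minimizer of $\cJ|_{\cN(\rn)}$, which by Lemma \ref{lem:nehari}(iii) combined with the principle of symmetric criticality is a solution of the system \eqref{eq:system} on all of $\rn$. This solution vanishes on the nonempty open set $\mathrm{int}(\rn\smallsetminus\o)$, contradicting the unique continuation principle for weakly coupled critical elliptic systems proved in \cite{chs}.

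The main technical step is the cutoff-and-projection argument in (i): one must verify that $\phi_r$ preserves both types of symmetry simultaneously (which is precisely why a fixed point of the whole group $\Gamma_\ell$---and not merely of $G$---is needed, as encoded in the hypothesis $\o\cap(\{0\}\times\r^{N-4})\neq\emptyset$), and that the scalar projection onto $\cN(\o)$ is eventually well defined with $t_\lambda\to 1$. Once (i) is in place, (ii) follows immediately from the unique continuation theorem of \cite{chs}.
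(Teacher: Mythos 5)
Your proof is correct. Part (ii) coincides with the paper's argument: extend by zero, note the extension is a Nehari minimizer and hence a solution on $\rn$, and invoke the unique continuation theorem of \cite{chs}. For part (i) you reach the same conclusion by a technically different path. The paper first passes to a minimizing sequence $\bf\vp_k\in\cN(\rn)$ whose components are $G$-invariant functions in $\cC_c^\infty(\rn)$ (using density), and then rescales about $\xi$ to shrink the compact supports into $\o$; since the supports already fit inside $\o$ after rescaling, no cutoff is needed and the Nehari constraint is preserved exactly, so $\cJ(\widetilde{\bf\vp}_k)=\cJ(\bf\vp_k)\to c(\rn)$. You instead take an arbitrary $\bf u\in\cN(\rn)$, concentrate it at $\xi_0$ by dilation, and truncate with a $\Gamma_\ell$-invariant cutoff; this forces you to re-project onto $\cN(\o)$ and to verify $t_\lambda\to 1$, which you do correctly. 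The paper's version trades the cutoff estimates and Nehari projection for a density argument; yours is a bit more self-contained but requires a little more bookkeeping. Both correctly identify the crucial structural point, namely that $\xi_0$ (resp.\ $\xi$) is a fixed point of the whole group $\Gamma_\ell$, so that translation by $\xi_0$ and a radial cutoff about $\xi_0$ preserve $\mathscr{D}$.
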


\begin{proof}
$(i):$ \ As $\cN(\o)\subset\cN(\rn)$ via trivial extension, we have that $c(\o)\geq c(\rn)$. To prove the opposite inequality note that, as the space of $G$-invariant functions in $\cC^\infty_c(\rn)$ is dense in the space of $G$-invariant functions in $D^{1,2}_0(\rn)$ there exists a sequence $\bf\vp_k=(\vp_{1,k},\ldots,\vp_{\ell,k})\in\cN(\rn)$ such that $\vp_{j,k}\in\cC^\infty_c(\rn)$ and $\cJ(\bf\vp_k)\to c(\rn)$. Let $\xi\in\o\cap(\{0\}\times\r^{N-4})$ and $r>0$ be such that $B_r(\xi)\subset\o$, where $B_r(\xi)$ is the ball centered at $\xi$ of radius $r$.  Fix $\eps_k>0$ such that $\eps_kz\in B_r(0)$ for every $z\in\supp(\vp_{1,k})$. Since $\bf\vp_k$ satisfies $(S_2)$ we have that $\eps_kx\in B_r(0)$ for every $x\in\supp(\vp_{j,k})$ and $j=1,\ldots,\ell$. Define
$$\widetilde{\vp}_{j,k}(x):=\eps_k^\frac{2-N}{2}\vp_{j,k}\Big(\frac{x-\xi}{\eps_k}\Big).$$
Then $\widetilde{\vp}_{j,k}\in\cC^\infty_c(\o)$ and, since $g\xi=\xi$ for every $g\in G$ and $\vr_\ell\xi=\xi$, we have that $\widetilde{\bf\vp}_k=(\widetilde{\vp}_{1,k},\ldots,\widetilde{\vp}_{\ell,k})\in\mathscr{D}(\o)$. Furthermore, as
$$\|\widetilde{\vp}_{j,k}\|^2=\|\vp_{j,k}\|^2\qquad\text{and}\qquad\int_{B_1}|\widetilde{\vp}_{j,k}|^p|\widetilde{\vp}_{i,k}|^p=\irn|\vp_{j,k}|^p|\vp_{i,k}|^p,$$
we have that $\widetilde{\bf\vp}_k\in\cN(\o)$ and $\cJ(\widetilde{\bf\vp}_k)=\cJ(\bf\vp_k)\to c(\rn)$. This shows that $c(\o)\leq c(\rn)$.

$(ii):$ Arguing by contradiction, assume that some $u\in\cN(\o)$ satisfies $\cJ(u)=c(\o)$. Then, its trivial extension $\bar u$ to $\rn$ belongs to $\cN(\rn)$ and satisfies $\cJ(\bar u)=c(\rn)$. Thus,
$\bar u$ is a nontrivial solution of the system \eqref{eq:system_rn} all of whose components vanish in $\rn\smallsetminus\o$. This contradicts the unique continuation principle for systems \cite[Theorem 1.1]{chs}.
\end{proof}

\section{A pinwheel solution of the Yamabe system}
\label{sec:existence}

\begin{lemma}\label{lem:G}
Let $G$ act on $\rn\equiv\cc^2\times\r^{N-4}$ as in \eqref{eq:G}. Then any given sequences $(\eps_k)$ in $(0,\infty)$ and $(\zeta_{k})$ in $\rn$ contain subsequences that satisfy one of the following statements:
\begin{enumerate}
\item[$(i)$] either there exist $\eta_k\in\{0\}\times\r^{N-4}$ and $C_1>0$ such that $\eps_{k}^{-1}|\zeta_{k}-\eta_k|<C_1$ for all $k\in\n$,
\item[$(ii)$] or, for each $m\in\n$, there exist $g_1,\ldots,g_m\in G$ such that $\eps_{k}^{-1}|g_i\zeta_{k}-g_j\zeta_{k}|\rightarrow\infty$ for any $i\neq j$.
\end{enumerate}
\end{lemma}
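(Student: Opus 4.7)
The plan is to decompose each point $\zeta_k$ according to the natural splitting $\rn \equiv \cc^2 \times \r^{N-4}$ and to exploit the fact that $G = \s1$ fixes the factor $\{0\}\times\r^{N-4}$ pointwise and acts as a circle on $\cc^2\smallsetminus\{0\}$. Writing $\zeta_k = (z_k, y_k)$ with $z_k = (z_{1,k}, z_{2,k}) \in \cc^2$ and $y_k \in \r^{N-4}$, the natural candidate for $\eta_k$ in statement $(i)$ is $\eta_k := (0, y_k) \in \{0\}\times\r^{N-4}$, so that $|\zeta_k - \eta_k| = |z_k|$ is exactly the distance of $\zeta_k$ to the $G$-fixed subspace.

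The key elementary computation is that, for any $g, h \in G$, the explicit form \eqref{eq:s1action} of the action yields
$$|g\zeta_k - h\zeta_k|^2 = |g-h|^2 |z_{1,k}|^2 + |\bar g - \bar h|^2 |z_{2,k}|^2 = |g-h|^2\, |z_k|^2,$$
because the $y$-component is fixed and $|\bar g - \bar h| = |g - h|$. Hence the size of any finite portion of the $G$-orbit of $\zeta_k$ is controlled above and below by $|z_k|$ via the chordal distances $|g_i - g_j|$ on $\s1$.

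With this in hand, I pass to a subsequence of $r_k := \eps_k^{-1}|z_k| \in [0,\infty)$ and distinguish two cases. If $(r_k)$ remains bounded, statement $(i)$ holds with $C_1 := 1 + \sup_k r_k$. Otherwise $r_k \to \infty$, and for each $m \in \n$ I choose pairwise distinct elements $g_1, \ldots, g_m \in \s1$; the quantity $\delta_m := \min_{i \neq j}|g_i - g_j|$ is then a positive constant, so
$$\eps_k^{-1}|g_i\zeta_k - g_j\zeta_k| = |g_i - g_j|\, r_k \geq \delta_m\, r_k \longrightarrow \infty,$$
giving statement $(ii)$.

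No serious obstacle is expected: the argument reduces to a bounded/unbounded dichotomy for the rescaled distance $\eps_k^{-1}\dist(\zeta_k, \{0\}\times\r^{N-4})$, combined with the explicit identity $|g\zeta_k - h\zeta_k| = |g-h|\,|z_k|$. The only subtle point worth flagging is that in case $(ii)$ the integer $m$ is chosen \emph{after} the subsequence extraction, but this is harmless since the subsequence on which $r_k \to \infty$ is independent of $m$, and a fresh choice of $g_1, \ldots, g_m \in \s1$ is made for each $m$.
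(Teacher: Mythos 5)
Your proof is correct and follows the same basic dichotomy as the paper's --- extract a subsequence along which the rescaled distance $r_k := \eps_k^{-1}|z_k| = \eps_k^{-1}\dist(\zeta_k,\{0\}\times\r^{N-4})$ is either bounded or tends to infinity --- but the way you dispatch the unbounded case is genuinely cleaner. The paper first passes to a further subsequence so that the direction $z_k/|z_k|$ converges to some $z\neq 0$, chooses $g_1,\ldots,g_m$ with $g_i(z,0)$ pairwise distinct, sets $3d := \min_{i\neq j}|g_i(z,0)-g_j(z,0)|$, and then runs a triangle-inequality estimate for $k$ large enough that $|z_k/|z_k|-z|<d$. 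Your exact identity
$$|g\zeta_k - h\zeta_k|^2 = |g-h|^2|z_{1,k}|^2 + |\bar g - \bar h|^2|z_{2,k}|^2 = |g-h|^2\,|z_k|^2,$$
which uses only that conjugation is an isometry of $\cc$, bypasses both the extraction of a limiting direction and the approximation estimate: for \emph{any} pairwise distinct $g_1,\ldots,g_m\in\s1$ one has $\eps_k^{-1}|g_i\zeta_k-g_j\zeta_k| = |g_i-g_j|\,r_k$ exactly, so the conclusion is immediate once $r_k\to\infty$. This buys brevity and makes the role of the $G$-fixed subspace as the locus where the orbit-diameter collapses completely transparent; the paper's version is slightly longer but generalizes more readily to situations where the orbit distance is controlled only up to a two-sided bound rather than by an exact identity. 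Both are valid proofs; yours is the more economical one for this specific group action.
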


\begin{proof}
After passing to a subsequence, we have that $\eps_k^{-1}\dist(\zeta_k,\{0\}\times\r^{N-4})\to a\in[0,\infty]$. If $a<\infty$ then $(i)$ holds true. If $a=\infty$ we may assume that $\zeta_k=(z_k,y_k)\in\cc^2\times\r^{N-4}$ with $z_k\neq 0$ for all $k\in\n$, and passing to a subsequence we have that
$$\frac{z_k}{|z_k|}\to z\quad\text{in \ }\cc^2.$$
As $z\neq 0$, for each $m\in\n$ there exist $g_1,\ldots,g_m\in G$ such that $g_i(z,0)\neq g_j(z,0)$ if $i\neq j$. Let $3d:=\min_{i\neq j}|g_i(z,0)-g_j(z,0)|$, and let $k_0\in\n$ be such that
$$\Big|\frac{z_k}{|z_k|}-z\Big|<d\qquad\text{if \ }k\geq k_0.$$
Then, since the $g_i$'s are a linear isometries, we obtain
\begin{align*}
d\leq \Big|\frac{g_i(z_k,0)}{|z_k|}-\frac{g_j(z_k,0)}{|z_k|}\Big|=\Big|\frac{g_i\zeta_k}{|z_k|}-\frac{g_j\zeta_k}{|z_k|}\Big|\qquad\text{if \ }i\neq j\text{ \ and \ }k\geq k_0. 
\end{align*}
Therefore, $\eps_k^{-1}|z_k|\,d\leq \eps_k^{-1}|g_i\zeta_k-g_j\zeta_k|$ and, since $\eps_k^{-1}|z_k|=\eps_k^{-1}\dist(\zeta_k,\{0\}\times\r^{N-4})\to\infty$, we get that 
$$\eps_k^{-1}|g_i\zeta_k-g_j\zeta_k|\to\infty\qquad\text{if \ }i\neq j.$$
This shows that, if $a=\infty$, then $(ii)$ holds true.
\end{proof}

The following theorem describes the behavior of minimizing sequences to the system in the unit ball.

\begin{theorem} \label{thm:minimizing} 
Let $B_1$ be the unit ball centered at the origin in $\rn$ and $(\bf u_k)$ be a sequence in $\cN(B_1)$ such that $\cJ(\bf u_k)\to c(B_1)$. Then, after passing to a subsequence, there exist sequences $(\eps_k)$ in $(0,\infty)$ and $(\xi_k)$ in $B_1\cap(\{0\}\times\r^{N-4})$ and a least energy pinwheel solution $\bf w=(w_1,\ldots,w_\ell)$ to the system \eqref{eq:system_rn} in $\rn$, such that
\begin{align*}
\eps_k^{-1}\dist(\xi_k,\partial B_1)\to\infty,\qquad\text{and}\qquad\lim_{k\to\infty}\|\bf u_k - \widehat{\bf w}_k\|=0,
\end{align*}
where $\widehat{\bf w}_k=(\widehat{w}_{1,k},\ldots, \widehat{w}_{\ell,k})$ is given by
\begin{equation*}
 \widehat{w}_{i,k}(x) =\eps_{k}^{\frac{2-N}{2}}w_i\Big(\frac{x-\xi_k}{\eps_{k}}\Big)\qquad \text{for every \ } k\in\n \text{ \ and \ }i=1,\ldots,\ell.
\end{equation*}
\end{theorem}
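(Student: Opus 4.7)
The plan is to apply a concentration-compactness argument and show that, after a rescaling centered at a point of the $G$-fixed axis $\{0\}\times\r^{N-4}$, the minimizing sequence converges strongly to a least energy pinwheel solution on all of $\rn$. First, on $\cN(B_1)$ one has $\cJ(\bf u)=\frac{1}{N}\|\bf u\|^2$ (as in the proof of Lemma~\ref{lem:nehari}(ii)), so $(\bf u_k)$ is bounded in $\mathscr{D}(B_1)$. Since $\cN(B_1)$ is a natural constraint by Lemma~\ref{lem:nehari}(iii), Ekeland's principle allows me to replace $(\bf u_k)$ by a Palais--Smale sequence for $\cJ|_{\mathscr{D}(B_1)}$ at the level $c(B_1)$, which equals $c(\rn)$ by Proposition~\ref{prop:no_minimum}(i).

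Next I would locate a concentration scale. Fixing $\delta\in(0,S^{N/2})$ and using a Lévy-type concentration function, choose $\eps_k>0$ and $\zeta_k\in\overline{B_1}$ with
$$\sup_{\zeta\in\rn}\int_{B_{\eps_k}(\zeta)}|u_{1,k}|^{2p}=\int_{B_{\eps_k}(\zeta_k)}|u_{1,k}|^{2p}=\delta.$$
The uniform bound on $|u_{1,k}|_{2p}$ together with $|u_{1,k}|_{2p}^{2p}\geq S^{N/2}$ from Lemma~\ref{lem:nehari}(ii) forces $\eps_k\to 0$. As each $u_{1,k}$ is $G$-invariant and $g\in G$ acts as a linear isometry, the same $\delta$-mass sits in every ball $B_{\eps_k}(g\zeta_k)$. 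Applying Lemma~\ref{lem:G} to $(\eps_k,\zeta_k)$: alternative (ii) would produce, for every $m\in\n$, at least $m$ pairwise disjoint such balls, contradicting the uniform bound on $|u_{1,k}|_{2p}$. Hence alternative (i) holds, and I may pick $\xi_k\in\{0\}\times\r^{N-4}$ with $\eps_k^{-1}|\xi_k-\zeta_k|$ bounded.

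Since $g\xi_k=\xi_k$ for every $g\in G$ and $\vr_\ell\xi_k=\xi_k$, the rescaled tuple
$$w_{i,k}(x):=\eps_k^{\frac{N-2}{2}}u_{i,k}(\eps_k x+\xi_k),\qquad i=1,\ldots,\ell,$$
lies in $\mathscr{D}(\o_k)\cap\cN(\o_k)$ with $\o_k:=\eps_k^{-1}(B_1-\xi_k)$, and the Palais--Smale property, the functional value, and the pinwheel symmetries are preserved. Passing to a subsequence, $\bf w_k\rh\bf w$ weakly in $(D^{1,2}(\rn))^\ell$ (via trivial extension); the Lévy normalization combined with the bound on $\eps_k^{-1}|\xi_k-\zeta_k|$ and subcritical Rellich compactness prevents vanishing, while the very choice of $\eps_k$ rules out Dirac-type concentration at scale $1$ in the rescaled variables, so $\bf w\neq 0$. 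The Palais--Smale property then yields $\cJ'(\bf w)=0$ on the limiting open set of the $\o_k$.

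It remains to identify the limiting domain and to upgrade to strong convergence. If $\eps_k^{-1}\dist(\xi_k,\partial B_1)$ had a bounded subsequential limit, then $\o_k$ would Kuratowski-converge to an open half-space $H\subset\rn$ whose bounding hyperplane is orthogonal to a direction in $\r^{N-4}$ and with $0\in H\cap(\{0\}\times\r^{N-4})$; the weak limit $\bf w$ would then be a least energy pinwheel solution in $H$ since $c(H)=c(\rn)$ by Proposition~\ref{prop:no_minimum}(i), contradicting Proposition~\ref{prop:no_minimum}(ii). Hence $\eps_k^{-1}\dist(\xi_k,\partial B_1)\to\infty$, the $\o_k$ exhaust $\rn$, and $\bf w\in\cN(\rn)$. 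Weak lower semicontinuity of $\|\cdot\|^2$ combined with the identity $\cJ(\bf u)=\frac{1}{N}\|\bf u\|^2$ on every Nehari manifold gives $\cJ(\bf w)\leq c(\rn)$; equality holds since $\bf w\in\cN(\rn)$, so $\|\bf w_k\|\to\|\bf w\|$, and the Hilbert structure of $(D^{1,2}(\rn))^\ell$ upgrades weak to strong convergence, yielding $\|\bf u_k-\widehat{\bf w}_k\|\to 0$ as claimed. The main obstacle is forcing single-bubble concentration onto the $G$-fixed axis: at the critical exponent the usual Rellich compactness fails, and only the quantitative $\s1$-dispersion alternative of Lemma~\ref{lem:G} together with Proposition~\ref{prop:no_minimum} rules out both boundary escape and the spreading of mass along a nontrivial $G$-orbit.
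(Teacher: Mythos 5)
The proposal follows the same overall architecture as the paper's proof: Ekeland to get a Palais--Smale sequence, a L\'evy concentration function to pick $(\eps_k,\zeta_k)$, Lemma~\ref{lem:G} to force option~$(i)$ by the disjoint-balls argument, rescaling about a point of $\{0\}\times\r^{N-4}$, the half-space dichotomy ruled out by Proposition~\ref{prop:no_minimum}$(ii)$, and strong convergence from the Nehari identity $\cJ=\frac1N\|\cdot\|^2$. However, there is a genuine gap at the central ``nonvanishing'' step. You assert that ``the L\'evy normalization combined with the bound on $\eps_k^{-1}|\xi_k-\zeta_k|$ and subcritical Rellich compactness prevents vanishing,'' but this is not an argument. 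At the critical exponent, weak convergence $w_{1,k}\rh w_1$ in $D^{1,2}$ plus Rellich only give $w_{1,k}\to w_1$ in $L^2_{\mathrm{loc}}$, not in $L^{2p}_{\mathrm{loc}}$, so the normalization $\int_{B_{C_0+1}(0)}|w_{1,k}|^{2p}\geq\delta$ by itself does not pass to the limit. You even concede in your last sentence that ``the usual Rellich compactness fails'' at the critical exponent, which contradicts this step. The paper instead shows nonvanishing via a delicate cut-off computation: one tests $\partial_1\cJ(\bf u_k)$ against $\phi_k^2 u_{1,k}$ (using $\cJ'(\bf u_k)\to 0$ and $\beta<0$), applies the Sobolev inequality, and then absorbs the right-hand side into the left using the strict inequality $2\delta<S^{N/2}$ --- note that the paper fixes $\delta$ with $2\delta<S^{N/2}$, whereas your $\delta\in(0,S^{N/2})$ is not small enough for the absorption to close. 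This estimate is the entire point of the concentration-compactness dichotomy here and cannot be replaced by Rellich.

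Two further (smaller) issues. First, you never establish that $\bf u_k\rh\bf 0$ weakly in $\mathscr{D}(B_1)$; the paper needs this both to justify that the concentration profile does not already converge in $B_1$ (via Proposition~\ref{prop:no_minimum}$(ii)$) and to deduce $\eps_k\to 0$. Second, your claimed justification that the Nehari lower bound ``forces $\eps_k\to 0$'' is not valid: the normalization $\sup_x\int_{B_{\eps_k}(x)}|u_{1,k}|^{2p}=\delta$ with $|u_{1,k}|_{2p}^{2p}\geq S^{N/2}$ is perfectly compatible with $\eps_k$ bounded away from $0$ if the mass is spread over several balls. The paper obtains $\eps_k\to 0$ from $u_{1,k}\rh 0$ while $w_{1,k}\rh w_1\neq 0$, which requires the nonvanishing step that your sketch does not supply.
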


\begin{proof}
Using Ekeland's variational principle and the principle of symmetric criticality we may assume that $\cJ'(\bf u_{k})\rightarrow 0 $  in $[(D^{1,2}_0(B_1))^\ell]'$. Since $\cJ(\bf u_{k})=\frac{1}{N}\|\bf u_{k}\|^{2}\to c(B_1)$, the sequence $(\bf u_{k})$ is bounded in $(D^{1,2}_0(B_1))^\ell$ and therefore, after passing to a subsequence, we have that $\bf u_{k}\rh\bf u$ weakly in $\mathscr{D}(B_1)$. A standard argument shows that $\bf u$ is a solution of the system \eqref{eq:system} in $B_1$. So, if $\bf u\neq 0$, then $\bf u\in \cN(B_1)$ and
$$c(B_1)\leq\cJ(\bf u)=\frac{1}{N}\|\bf u\|^2\leq\liminf_{k\to\infty}\frac{1}{N}\|\bf u_k\|^2=\liminf_{k\to\infty}\cJ(\bf u_k)=c(B_1).$$
This shows that $\bf u$ is a least energy solution to the system \eqref{eq:system} in $B_1$, contradicting Proposition \ref{prop:no_minimum}$(ii)$.
 
As a consequence, $\bf u_{k}\rh\bf 0$ weakly in $(D^{1,2}_0(B_1))^\ell$. Now, fix $\delta\in\mathbb{R}$ such that $0<2\delta<S^{N/2}$. Since Lemma \ref{lem:nehari} states that $S^{N/2}\leq |u_{1,k}|_{2p}^{2p}$ for each $k$, there exist bounded sequences $(\eps_{k})$ in $(0,\infty)$ and $(\zeta_{k})$ in $\mathbb{R}^{N}$ such that, after passing to a subsequence,
\begin{equation}\label{desi}
\delta=\sup_{x\in\mathbb{R}^{N}}\int_{B_{\eps_{k}}(x)}|u_{1,k}|^{2p}= \int_{B_{\eps_{k}}(\zeta_{k})}|u_{1,k}|^{2p}
\end{equation}
Applying Lemma \ref{lem:G} to the sequences $(\eps_{k})$ in $(0,\infty)$ and $(\zeta_{k})$, we have two possible options. Let us show that option $(ii)$ is impossible.

By contradiction, assume that for any $m\in\mathbb{N}$ there exist $g_{1},\dots,g_{m}\in G$ such that $\eps_{k}^{-1}\left|g_{i}\zeta_{k}-g_{j}\zeta_{k} \right|\rightarrow\infty $ for every $i\neq j$. Thus, for sufficiently large $k$ we have
\begin{equation*}
	B_{\eps_{k}}(g_{i}\zeta_{k})\cap 	B_{\eps_{k}}(g_{j}\zeta_{k})=\emptyset \ \ \mbox{ if } i\neq j
\end{equation*}
and, since $u_{1,k}$ is $G$-invariant, from \eqref{desi} we get
\begin{equation*}
m\delta=\sum_{i=1}^m\int_{B_{\eps_{k}}(g_{i}\zeta_{k})}|u_{1,k}|^{2p}\leq \int_{B_1}|u_{1,k}|^{2p}.
\end{equation*} 
This is a contradiction because $(u_{1,k})$ is bounded in $D^{1,2}_0(B_1)$.  

As a result, option $(i)$ must hold true, i.e., after passing to a subsequence, there exist $\eta_k\in\{0\}\times\r^{N-4}$ and $C_1>0$ such that $\eps_{k}^{-1}|\zeta_{k}-\eta_k|<C_1$ for all $k\in\n$. Then, $B_{\eps_k}(\zeta_k)\subset B_{(C_1+1)\eps_k}(\eta_k)$ and, as a consequence,
\begin{equation} \label{eq:eta}
0<\delta=\int_{B_{\eps_k}(\zeta_k)}|u_{1,k}|^{2p}\leq \int_{B_{(C_1+1)\eps_k}(\eta_k)}|u_{1,k}|^{2p}.
\end{equation}
Passing to a subsequence we have that
\begin{equation*}
d_{k}:=\varepsilon_k^{-1}\dist(\eta_k,\partial B_1)\to d\in [0,\infty].
\end{equation*} 
If $d\in[0,\infty)$, for each $k$ we take $\xi_k\in\partial B_1\cap(\{0\}\times\r^{N-4})$ such that $|\eta_k-\xi_k|=\dist(\eta_k,\partial B_1)$. Whereas, if $d=\infty$, we set $\xi_k:=\eta_k$. In this last case \eqref{eq:eta} implies that $\dist(\xi_k,B_1)\leq (C_1+1)\eps_k$ and, as $d=\infty$, necessarily $\xi_k\in B_1$. Summing up, there are two possibilities:
\begin{itemize}
\item[$(1)$] either $\xi_k\in\partial B_1\cap(\{0\}\times\r^{N-4})$,
\item[$(2)$] or $\xi_k\in B_1\cap(\{0\}\times\r^{N-4})$ and $\varepsilon_k^{-1}\mathrm{dist}(\xi_k,\partial B_1)\to\infty$.
\end{itemize}
Furthermore, in both cases there exists $C_0>0$ such that $\eps_k^{-1}|\zeta_k-\xi_k|\leq C_0$. Thus, $B_{\varepsilon_k}(\zeta_k)\subset B_{(C_0+1)\varepsilon_k}(\xi_k)$ for large enough $k$ and, as a consequence,
\begin{equation} \label{eq:xi}
0<\delta =\sup_{x\in\rn}\int_{B_{\eps_k}(x)}|u_{1,k}|^{2p} = \int_{B_{\varepsilon_k}(\zeta_k)}|u_{1,k}|^{2p}\leq \int_{B_{(C_0+1)\varepsilon_k}(\xi_k)}|u_{1,k}|^{2p}.  
\end{equation}

Let $\Omega_{k}:=\{x\in\mathbb{R}^{N}: \eps_{k}x+\xi_k\in B_1 \}$ and consider the function $\bf w_{k}=(w_{1,k},\ldots,w_{\ell,k})$ defined by
\begin{equation*}
w_{i,k}(x):=\eps_{k}^{\frac{N-2}{2}}u_{i,k}(\eps_{k}x+\xi_k)\quad\text{if \ }x\in\o_k,\qquad w_{i,k}(x):=0\quad\text{otherwise}.
\end{equation*}
Since $\xi_k\in\{0\}\times\r^{N-4}$ we have that $g\xi_k=\xi_k$ for every $g\in G$ and $\vr_\ell\xi_k=\xi_k$. Therefore, $\bf w_{k}\in\mathscr{D}(\rn)$ for every $k$ and, as $\|w_{i,k}\|=\|u_{i,k}\|$, after passing to a subsequence we get that
\begin{equation*}
\bf w_{k}\rightharpoonup \bf w \mbox{ weakly in } \mathscr{D}(\mathbb{R}^{N}),\qquad \bf w_{k}\rightarrow \bf w \mbox{ in }  (L^{2}_{\mathrm{loc}}(\mathbb{R}^{N}))^{\ell} \qquad \mbox{ and }\qquad \bf w_{k}\rightarrow \bf w  \mbox{ \ a.e. in } ( \mathbb{R}^{N}) ^{\ell}. 
\end{equation*}
In addition, equation \eqref{eq:xi} yields
\begin{equation}\label{des1}
	\delta=\sup_{x\in\mathbb{R}^{N}} \int_{B_{1}(x)}|w_{1,k}|^{2p}\leq 	\int_{B_{(C_{0}+1)}(0)}|w_{1,k}|^{2p}.
\end{equation}
Next we show that $\bf w\neq\bf 0$. Let $\phi \in\cC_c^\infty(\rn)$ and set $\phi_{k}(x):=\phi\big(\frac{x-\xi_k}{\eps_k}\big)$. Then, $(\phi_{k}^{2}u_{1,k})$ is a bounded sequence in $D_{0}^{1,2}(B_1)$ and, since $\cJ'(\bf u_{k})\rightarrow 0 $  in $[(D^{1,2}_0(B_1))^\ell]'$, performing a change of variable we get
\begin{equation*}
o(1)=\partial_1\cJ(\bf u_{k})[\phi_{k}^{2}u_{1,k}]=\int_{\mathbb{R}^{N}}\nabla w_{1,k}\cdot\nabla(\phi^{2}w_{1,k})-\irn|w_{1,k}|^{2p}\phi^{2}-\beta\sum\limits_{\substack{j=1\\j\neq 1}}^\ell\irn|w_{j,k}|^p|w_{1,k}|^{p}\phi^{2}.
\end{equation*}
Assume, by contradiction, that $\bf w=\bf 0$, then $w_{1,k}\rightarrow 0 \mbox{ in } L^{2}_{\mathrm{loc}}(\mathbb{R}^{N})$. As a consequence, if $\phi \in\cC_c^\infty(B_1(x))$ for some $x\in\rn$ we get
\begin{align*}
		\irn|\nabla (\phi w_{1,k})|^{2}&= \irn\nabla w_{1,k}\cdot\nabla(\phi_{k}^{2}w_{1,k})+\irn w_{1,k}^{2}|\nabla\phi|^{2}\\
		&=\int_{B_{1}(z)}|w_{1,k}|^{2p}\phi^{2}+\beta\sum\limits_{\substack{j=1\\j\neq 1}}^\ell\irn|w_{j,k}|^p|w_{1,k}|^{p}\phi^{2}+o(1)\\
		&\leq \int_{B_{1}(z)}|w_{1,k}|^{2p}\phi^{2}+o(1)\\
		&\leq \Big( \int_{B_{1}(z)}|w_{1,k}|^{2p}\Big)^{\frac{2p-2}{2p}} \Big( \irn|\phi w_{1,k}|^{2p}\Big)^{\frac{2}{2p}} +o(1)\\ 
		&\leq  \delta^{2/N}S^{-1}\Big(\irn|\nabla (\phi w_{1,k})|^{2}\Big)+o(1)\\
		&< \Big( \frac{1}{2}\Big) ^{2/N}\Big(\irn|\nabla (\phi w_{1,k})|^{2}\Big)+o(1),
\end{align*}
because $\beta<0$, $2\delta<S^{N/2}$ and $\delta$ satisfies \eqref{des1}. Hence, $\irn|\nabla (\phi w_{1,k})|^{2}=o(1)$ and therefore $|\phi w_{1,k}|_{2p}=o(1)$ for every $\phi \in\cC_c^\infty(B_{1}(x))$ and any $x\in\mathbb{R}^{N}$. This implies that $w_{1,k}\rightarrow 0$ in $L_{\mathrm{loc}}^{2p}(\mathbb{R}^{N})$, contradicting \eqref{des1}. This proves that $\bf w=(w_1,\ldots,w_\ell)\neq \bf 0$. 

Note that, since $u_{1,k}\rh 0$ and $w_{1,k}\rh w_1\neq 0$ weakly in $D^{1,2}(\rn)$, we have that $\eps_k\to 0$.

Let us now analyze the alternatives $(1)$ and $(2)$. If $(1)$ holds true, passing to a subsequence we have that $\xi_k\to\xi\in \partial B_1\cap(\{0\}\times\r^{N-4})$. Then $\mathbb{H}:=\{x\in\rn:\xi\cdot x<0\}$ is $\Gamma_\ell$-invariant (where $\Gamma_\ell$ is the group generated by $G\cup\{\vr_\ell)$), and one can easily see that $\bf w\in\mathscr{D}(\mathbb{H})$  and that $\supp(\vp)\subset \Omega_{k}$ for large enough $k$ if $\vp\in\cC^\infty_c(\mathbb{H})$. Therefore the support of the function $\vp_k(x):=\eps_{k}^{(2-N)/2}\vp\left(\frac{x-\xi_k}{\eps_k}\right)$ is contained in $B_1$ and, since the sequence $(\vp_k)$ is bounded in $D^{1,2}_0(B_1)$, performing a change of variable we obtain
\begin{align*}
\partial_i\cJ(\bf w_{k})\vp &=\irn \nabla(w_{i,k})\cdot \nabla\vp-\irn|w_{i,k}|^{2p-2}w_{i,k}\vp-\beta\sum\limits_{\substack{j=1\\j\neq i}}^\ell\irn|w_{j,k}|^p|w_{i,k}|^{p-2}w_{i,k}\vp\\
&=\partial_i\cJ(\bf u_{k})\vp_k=o(1).
\end{align*}
As $\bf w_{k}\rightharpoonup \bf w$ weakly in $\mathscr{D}(\mathbb{R}^{N})$, we derive that
\begin{equation*}
\partial_i\cJ( \bf w)\vp=	\lim_{k\to\infty}\partial_i\cJ( \bf w_{k})\vp=0\qquad \text{for every \ }\vp\in\cC^\infty_c(\mathbb{H}),\quad i=1,\ldots, \ell.
\end{equation*}
Therefore $\bf w\in\cN(\mathbb{H})$, and using Proposition \ref{prop:no_minimum}$(i)$ we obtain
$$c(\mathbb{H})\leq \cJ(\bf w)=\frac{1}{N}\|\bf w\|^2\leq \liminf_{k\to\infty}\frac{1}{N} \|\bf w_k\|^2=\liminf_{k\to\infty}\frac{1}{N} \|\bf u_k\|^2=c(B_1)=c(\mathbb{H}).$$
This shows that $\bf w$ is a least energy pinwheel solution of \eqref{eq:system} in $\mathbb{H}$, contradicting Proposition \ref{prop:no_minimum}$(ii)$.

So, we are left with alternative $(2)$, i.e., $\xi_k\in B_1\cap(\{0\}\times\r^{N-4})$ and $\eps_k^{-1}\dist(\xi_k,\partial B_1)\to\infty$. It follows that $\supp(\vp)\subset \Omega_{k}$ for large enough $k$ for any $\vp\in\cC^\infty_c(\rn)$, and arguing as before we see that
\begin{equation*}
\partial_i\cJ(\bf w)\vp=\lim_{k\to\infty}\partial_i\cJ( \bf w_{k})\vp=0\qquad \text{for every \ }\vp\in\cC^\infty_c(\rn),\quad  i=1,\ldots, \ell.
\end{equation*}
Thus $\bf w\in\cN(\rn)$, and using Proposition \ref{prop:no_minimum}$(i)$ we obtain
$$c(\rn)\leq \cJ(\bf w)=\frac{1}{N}\|\bf w\|^2\leq \liminf_{k\to\infty}\frac{1}{N} \|\bf w_k\|^2=\liminf_{k\to\infty}\frac{1}{N} \|\bf u_k\|^2=c(B_1)=c(\rn).$$
This shows that $\bf w$ is a least energy pinwheel solution of \eqref{eq:system_rn} and that $\bf w_k\to\bf w$ strongly in $\mathscr{D}(\rn)$.
So, setting $\widehat{\bf w}_k=(\widehat{w}_{1,k},\ldots, \widehat{w}_{\ell,k})$ with
\begin{equation*}
 \widehat{w}_{i,k}(x) :=\eps_{k}^{\frac{2-N}{2}}w_i\Big(\frac{x-\xi_k}{\eps_k}\Big),
\end{equation*}
we get that $\|\bf u_k - \widehat{\bf w}_k\|^{2}=\|\bf w_k - \bf w\|^{2}=o(1)$. This completes the proof.
\end{proof}

\begin{proof}[Proof of Theorem \ref{thm:existence}] 
Since $\cN(B_1)\neq \emptyset$ by Lemma \ref{lem:nehari}, there is a sequence $(\bf u_k)$ in $\cN(B_1)$ such that $\cJ(\bf u_k)\to c(B_1)$. It follows from Theorem \ref{thm:minimizing} that there exists a least energy pinwheel solution $\bf w=(w_1,\ldots,w_\ell)$ to the system \eqref{eq:system_rn}. Then, $\bf u=(|w_1|,\ldots,|w_\ell|)\in \cN(\mathbb{R}^{N})$ and $\cJ(\bf u)=c(\rn)$. So $\bf u$ is a least energy pinwheel solution to \eqref{eq:system_rn} with non-negative components. 
\end{proof}

\section{Optimal pinwheel partitions for the Yamabe equation}
\label{sec:optimal_partition}

Let $G$ be the group defined in \eqref{eq:G}, and $\vr_\ell\in O(N)$ be as in \eqref{eq:rho}, i.e.,
$$\vr_\ell(z,y):=\Big(\Big(\cos\frac{\pi}{\ell}\Big)z+\Big(\sin\frac{\pi}{\ell}\Big)\tau z,\,y\Big),\qquad\text{if \ }(z,y)\in\cc^2\times\r^{N-4}.$$
Recall that an optimal $(G,\vr_\ell,\ell)$-partition for these particular choices is called an optimal $\ell$-pinwheel partition. To make this notion more precise, for any $G$-invariant open subset $\Omega$ of $\rn$ we set $D^{1,2}_0(\o)^G:=\{u\in D^{1,2}_0(\o):u\text{ is }G\text{-invariant}\}$ and consider the critical problem
\begin{equation} \label{probcrit}
	\begin{cases}
		-\Delta u = |u|^{2p-2}u, \\
		u\in D_0^{1,2}(\o)^{G},
	\end{cases}
\end{equation}
and its Nehari manifold $\mathcal{M}_{\Omega}:=\{ u\in D_0^{1,2}(\o)^{G}: u\neq 0, \ \|u\|^{2}=|u|_{2p}^{2p} \}$. Define
\begin{equation*}
	 \mathfrak{c}_{\o}=: \inf_{ u\in\mathcal{M}_{\Omega} } \frac{1}{N}\|u\|^{2}.
\end{equation*}

\begin{definition}
Let $G$ and $\vr_\ell$ be as above. An $\ell$-tuple $(\Omega_{1}, \dots, \Omega_{\ell})$ of subsets of $\rn$ is an \textbf{optimal $\ell$-pinwheel partition} for the Yamabe problem \eqref{eq:yamabe0} if
\begin{itemize}
\item[$(P_1)$] $\o_i\neq\emptyset$, \ $\o_i\cap\o_j=\emptyset$ if $i\neq j$, \ $\rn=\bigcup_{i=1}^\ell\overline{\o}_i$ \ and \ $\o_i$ is open and $G$-invariant,
\item[$(P_2)$] for each  $i=1,\ldots,\ell$, the value $c_{\o_i}$ is attained by some function $u_i\in\cM_{\o_i}$,
\item[$(P_3)$] $\vr_\ell(\o_2)=\o_1, \ \ldots \ , \ \vr_\ell(\o_\ell)=\o_{\ell-1}, \ \ \vr_\ell(\o_1)=\o_\ell$,
\item[$(P_4)$] and
\begin{equation*}
\sum_{j=1}^{\ell}\mathfrak{c}_{\o_{j}}=\inf_{(\Theta_{1},\dots \Theta_{\ell}) \in \mathcal{P}_{\ell}}	\sum_{j=1}^{\ell}\mathfrak{c}_{\Theta_{j}}.
\end{equation*}
where $\cP_\ell$ is the set of all $\ell$-tuples $(\Theta_{1},\dots \Theta_{\ell})$ of subsets of $\rn$ that satisfy $(P_1)$, $(P_2)$ and $(P_3)$.
\end{itemize}
\end{definition}

Let us define
\begin{equation*}
\mathcal{W}_\ell:=\{(u_{1},\dots,u_{\ell})\in \mathscr{D}(\rn):  u_{i}\neq 0, \ \|u_{i}\|^{2}=|u_{i}|_{2p}^{2p} \mbox{ for } i=1,\dots, \ell, \mbox{ and }u_{i}u_{j}=0  \mbox{ a.e. in } \mathbb{R}^{N} \mbox{ if } i\neq j  \}
\end{equation*}
and
\begin{equation*}
	\mathfrak{c}^{\ell}_{\infty}:=\inf_{(u_{1},\dots,u_{\ell})\in \mathcal{W}_{\ell} }\frac{1}{N}\sum_{i=1}^{\ell}\|u_{i}\|^{2}.
\end{equation*}
It is clear that $\mathcal{W}_{\ell}$ is not empty (see the proof of Lemma \eqref{lem:nehari}). Hence, the Sobolev inequality yields $\mathfrak{c}^{\ell}_{\infty}>0$. Note also that
\begin{equation}\label{desinf11}
	\mathfrak{c}_\infty^\ell\leq \inf_{(\Theta_{1},\ldots, \Theta_{\ell}) \in \mathcal{P}_{\ell}}	\sum_{j=1}^{\ell}\mathfrak{c}_{\Theta_{j}}.
\end{equation}

\begin{proof}[Proof of Theorem \ref{thm:main2}.]
Let $\bf u_k$ be as in the statement of Theorem \ref{thm:main2}. We write $\cJ_{k}:\cD(\rn)\to\r$ and $\cN_{k}(\rn)$ for the energy functional and the Nehari manifold associated to the system \eqref{eq:system} with $\beta=\beta_{k}$, as defined in \eqref{eq:energy} and \eqref{eq:nehari}, and we set
\begin{equation*}
c_{k}(\rn):=\inf_{\bf u\in\cN_{k}(\rn)}\cJ_{k}(\bf u)=\cJ_{k}(\bf u_{k}).
\end{equation*}
Since $\mathcal{W}_{\ell}\subset \cN_{k}(\rn)$, we have that
\begin{equation*}
\cJ_{k}(\bf u_{k})=\frac{1}{N}\sum_{i=1}^{\ell}\|u_{i,k}\|^{2}=c_{k}(\rn)\leq \mathfrak{c}^{\ell}_{\infty} \qquad\text{for every \ } k\in\mathbb{N}.
\end{equation*}
Fix $\delta\in\mathbb{R}$ such that $0<2\delta<S^{N/2}$. By Lemma \ref{lem:nehari} there exist $\eps_{k}\in(0,\infty)$ and $\zeta_k\in\rn$ such that
\begin{equation*}
\delta=\sup_{x\in\rn}\int_{B_{\eps_{k}}(x)}|u_{1,k}|^{2p}=\int_{B_{\eps_{k}}(\zeta_k)}|u_{1,k}|^{2p}.
\end{equation*}
Since $(u_{1,k})$ is bounded in $D^{1,2}(\rn)$, arguing as in the proof of Theorem \ref{thm:minimizing} we see that option $(ii)$ of Lemma \ref{lem:G} is impossible. Hence, there exist $C_0>0$ and $\xi_k\in\{0\}\times\r^{N-4}$ such that $\eps_k^{-1}|\zeta_k-\xi_k|<C_0$ and, as a consequence,
\begin{equation} \label{eq:xi2}
0<\delta=\sup_{x\in\rn}\int_{B_{\eps_{k}}(x)}|u_{1,k}|^{2p}\leq\int_{B_{(C_0+1)\eps_{k}}(\xi_k)}|u_{1,k}|^{2p}.
\end{equation}
Let $\bf w_{k}=(w_{1,k},\ldots,w_{\ell,k})$ be given by
\begin{equation*}
w_{i,k}(x):=\eps_{k}^{\frac{N-2}{2}}u_{i,k}(\eps_{k}x+\xi_k).
\end{equation*}
Since $\xi_k\in\{0\}\times\r^{N-4}$ we have that $\bf w_{k}\in \cN_{k}(\rn)$ and $\cJ_k(\bf w_{k})=\cJ_k(\bf u_{k})=c_k(\rn)$, so $\bf w_{k}$ is a least energy pinwheel solution to the system \eqref{eq:system_rn} with $\beta=\beta_k$.

$(i):$ \ After passing to a subsequence,
\begin{equation*}
\bf w_{k}\rightharpoonup \bf w_\infty \mbox{ weakly in } \mathscr{D}(\mathbb{R}^{N}),\qquad \bf w_{k}\rightarrow \bf w_\infty \mbox{ in }  (L^{2}_{\mathrm{loc}}(\mathbb{R}^{N}))^{\ell} \qquad \mbox{ and }\qquad \bf w_{k}\rightarrow \bf w_\infty  \mbox{ \ a.e. in } ( \mathbb{R}^{N}) ^{\ell}. 
\end{equation*}
Hence $\bf w_{\infty}=(w_{1,\infty},\dots, w_{\ell,\infty} )$ satisfies that $w_{i,\infty} \geq 0$ for every $i=1,\dots, \ell$. Equation \eqref{eq:xi2} yields
\begin{equation}\label{eq:xi3}
\delta=\sup_{x\in\mathbb{R}^{N}} \int_{B_{1}(x)}|w_{1,k}|^{2p}\leq 	\int_{B_{C_0+1}(0)}|w_{1,k}|^{2p}.
\end{equation}
So arguing as in the proof of Theorem \ref{thm:minimizing}, using \eqref{eq:xi3}, one shows that $\bf w_{\infty}=(w_{1,\infty},\dots, w_{\ell,\infty} )\neq\bf 0$. 

On the other hand, since $\partial_i\cJ_{k}(\bf w_{k})w_{i,k}=0$ for every $k\in\mathbb{N}$, we have
\begin{equation*}
0\leq -\beta_{k}\sum\limits_{\substack{j=1\\j\neq i}}^\ell\irn|w_{j,k}|^p|w_{i,k}|^{p}\leq \irn|w_{i,k}|^{2p}\leq C_{1}\quad\text{for all \ }k\in\n,
\end{equation*}
and using Fatou's Lemma we obtain 
\begin{equation*}
\irn|w_{j,\infty}|^p|w_{i,\infty}|^{p} \leq \liminf_{k\rightarrow \infty} \irn|w_{j,k}|^p|w_{i,k}|^{p}\leq \liminf_{k\rightarrow \infty} \frac{C_1}{-\beta_{k}}=0\qquad\text{for \ }i\neq j.
\end{equation*}
This implies that $w_{j,\infty}w_{i,\infty}=0$ a.e. in $\mathbb{R}^{N}$ whenever $i\neq j$. Furthermore, as
\begin{equation*}
	\begin{split}
		0&=\partial_i\cJ_{k}(\bf w_{k})w_{i,\infty}=\langle w_{i,k},w_{i,\infty}\rangle-\irn|w_{i,k}|^{2p-2}w_{i,k}w_{i,\infty}-\beta_{k}\sum\limits_{\substack{j=1\\j\neq i}}^\ell\irn|w_{j,k}|^p|w_{i,k}|^{p-2}w_{i,k}w_{i,\infty}\\
		&\geq \langle w_{i,k},w_{i,\infty}\rangle-\irn|w_{i,k}|^{2p-2}w_{i,k}w_{i,\infty},
	\end{split}
\end{equation*}
we have that
\begin{equation*}
 \langle w_{i,k},w_{i,\infty}\rangle\leq\irn|w_{i,k}|^{2p-2}w_{i,k}w_{i,\infty} \qquad\mbox{ for every \ } k\in\mathbb{N}
\end{equation*}
and, since $\bf w_{k}\rightharpoonup \bf w_{\infty} \mbox{ weakly in } \mathscr{D}(\mathbb{R}^{N})$, we obtain
\begin{equation*}
\|w_{i,\infty}\|^{2}\leq\irn|w_{i,\infty}|^{2p}.
\end{equation*}
Set
\begin{equation*}
	t:=\left( \frac{\|w_{1,\infty}\|^{2}}{\int_{\mathbb{R}^{N}}|w_{1,\infty}|^{2p} }\right)^{\frac{1}{2p-2}} 
\end{equation*} 
Note that $t\in \left( 0,1\right] $ and $t \bf w _{\infty} \in 	\mathcal{W}_{\ell}$. Therefore,
\begin{equation*}
 \mathfrak{c}^{\ell}_{\infty} \leq \frac{1}{N}\|t\bf w_{\infty}\|^{2}\leq \frac{1}{N}\|\bf w_{\infty}\|^{2}\leq \frac{1}{N}\liminf_{k\rightarrow \infty}\|\bf w_{k}\|^{2} \leq \mathfrak{c}^{\ell}_{\infty}.
\end{equation*}
Hence,  $t=1,$ \ $ \bf w _{\infty} \in 	\mathcal{W}_{\ell},$ \ $\bf w_{k}\rightarrow \bf w_{\infty}$ strongly in $\mathscr{D}(\mathbb{R}^{N})$ and
\begin{equation}\label{igu12}
	\frac{1}{N}\|\bf w_{\infty}\|^{2}=\mathfrak{c}^{\ell}_{\infty}.
\end{equation}
Therefore, $ w_{i,k}\rightarrow  w_{i,\infty}$ strongly in $L^{2p}(\mathbb{R}^{N})$ and
\begin{equation*}
\lim_{k\to\infty}\|w_{i,k} \|^{2}=\|w_{i,\infty}\|^{2}=|w_{i,\infty}|_{2p}^{2p}=\lim_{k\to\infty}|w_{i,k}|_{2p}^{2p} \ \ \mbox{ for every  } i=1,\ldots,\ell.
\end{equation*}
It follows that
\begin{equation*}
\lim_{k\rightarrow\infty}\beta_{k}\sum\limits_{\substack{j=1\\j\neq i}}^\ell\irn|w_{j,k}|^p|w_{i,k}|^{p}=\lim_{k\rightarrow\infty}(\|w_{i,k} \|^{2}-|w_{i,k} |_{2p}^{2p})=0 \quad\mbox{ whenever } i\neq j,
\end{equation*}
as claimed.
\medskip

$(ii):$ \ Lemma \ref{maincont} shows that $(w_{i,k})$ is uniformly bounded in $L^\infty(\rn)$. It follows from \cite[Theorem B.2]{cpt}  that  $(w_{i,k})$ is uniformly bounded in $\cC^{0,\alpha}(K)$ for any compact subset $K$ of $\mathbb{R}^{N}$ and $\alpha\in (0,1)$. So using the Arzelà-Ascoli theorem we conclude that $w_{i,\infty}\in \cC^{0}(\mathbb{R}^{N})$ for every $i=1,\dots,\ell$. As a consequence,
\begin{equation*}
\Omega_{i}:=\{x\in\mathbb{\mathbb{R}}^{N}: w_{\infty,i}(x)>0 \}
\end{equation*}  
is open and nonempty. Since $\bf w_{\infty}\in \mathscr{D}(\mathbb{R}^{N})$, each $w_{j,\infty}$ is $G$-invariant and $w_{j+1,\infty}=w_{j,\infty}\circ\vr_\ell$. These properties imply that each $\o_j$ is $G$-invariant and that $(\Omega_{1},\ldots, \Omega_{\ell})$ satisfies $(P_3)$. Furthermore, since $w_{j,\infty}w_{i,\infty}=0$, we have that $\Omega_{i}\cap \Omega_{j}=0$ whenever $i\neq j$.

As $ \bf w _{\infty} \in 	\mathcal{W}_{\ell}$, we have that $\|w_{i,\infty}\|^{2}=|w_{i,\infty}|_{2p}^{2p}$. Therefore $w_{i,\infty}\in \mathcal{M}_{\Omega_{i}}$ for every $i$, where $\mathcal{M}_{\Omega_{i}}$ is the  Nehari manifold for problem \eqref{probcrit} in $\o_i$. 
We claim that
\begin{equation*}
\frac{1}{N}\|w_{1,\infty}\|^2=\mathfrak{c}_{\o_{1}}:= \inf_{u\in\mathcal{M}_{\Omega_1}}\frac{1}{N}\|u\|^2.
\end{equation*}
Otherwise, there would exist $v_{1}\in \mathcal{M}_{\Omega_{1}}$ such that
\begin{equation*}
\frac{1}{N}\|w_{1,\infty}\|^{2}> \frac{1}{N}\|v_{1}\|^{2}\geq	\mathfrak{c}_{\o_{1}}.
\end{equation*}
Then, if we define $v_{j+1}:=v_j\circ\vr_\ell$, $j=1,\ldots,\ell-1$, we have that $\bf v=(v_{1},\dots,v_{j})\in \mathcal{W}_{\ell}$ and
\begin{equation*}
	\frac{1}{N}\sum_{i=1}^{\ell}\|w_{i,\infty}\|^{2}> \frac{1}{N}\sum_{i=1}^{\ell}\|v_{i}\|^{2}\geq	\mathfrak{c}^{\ell}_{\infty},
\end{equation*}
which contradicts equation \eqref{igu12}. This shows that $\frac{1}{N}\|w_{1,\infty}\|^{2}=	\mathfrak{c}_{\o_{1}}$ and, by symmetry, $\frac{1}{N}\|w_{i,\infty}\|^{2}=	\mathfrak{c}_{\o_i}$ for every $i$. Therefore $w_{i,\infty}$ is a least energy solution for problem \eqref{probcrit} in $\o_i$. 
\medskip

$(iii):$ \ As shown above, $(u_{i,k})$ is uniformly bounded in $\cC^{0,\alpha}(\Omega)$ for any bounded open subset $\Omega$ of $\mathbb{R}^{N}$ and $\alpha\in (0,1)$. Applying the Arzelà-Ascoli theorem, we deduce that $w_{i,k}\rightarrow w_{i,\infty}$ in $\cC^{0,\alpha}(\Omega)$ for all $\alpha\in (0,1)$. We now have all the hypotheses that are needed to derive the statements in $(iii)$ from \cite[Theorem C.1]{cpt}.
\medskip

$(iv):$ \ As a consequence of $(iii)$ we get that $\rn=\bigcup_{i=1}^\ell\overline{\o}_i$. This completes the proof of property $(P_1)$. Properties $(P_2)$ and $(P_3)$ were proved in $(ii)$. Finally, from equations \eqref{desinf11} and \eqref{igu12} we obtain
\begin{equation*}
\inf_{(\Theta_{1},\dots, \Theta_{\ell}) \in \mathcal{P}_{\ell}}	\sum_{j=1}^{\ell}\mathfrak{c}_{\Theta_{j}}\leq 	\sum_{i=1}^{\ell}\mathfrak{c}_{\o_{i}}=\sum_{i=1}^{\ell}\frac{1}{N}\|w_{i,\infty}\|^{2}=\mathfrak{c}^{\ell}_{\infty}\leq \inf_{(\Theta_{1},\dots, \Theta_{\ell}) \in \mathcal{P}_{\ell}}	\sum_{j=1}^{\ell}\mathfrak{c}_{\Theta_{j}},
\end{equation*}
which yields $(P_4)$. This shows that $ (\Omega_{1},\dots, \Omega_{\ell}) $ is an optimal $\ell$-pinwheel partition for the Yamabe problem \eqref{eq:yamabe0}. 
\medskip

$(v)$ Let $\ell=2$ and let $\Gamma_{2}$ be the subgroup of $O(N)$ generated by $G$ and $\vr_2$ and $\phi_2:\Gamma_{2}\rightarrow \mathbb{Z}_{2}$ be the homomorphism given by $\phi_{2}(g):=1$ for every $g\in G$ and $\phi_{2}(\vr_2):=-1$. The solutions to the Yamabe problem \eqref{eq:yamabe0} that satisfy 
\begin{equation}\label{lequiva}
	v(\gamma x)=\phi_{2}(\gamma)v(x) \qquad \text{for all \ } \gamma\in \Gamma_{2}, \ x\in\mathbb{R}^{N},
\end{equation}
are the critical points of the functional $J:D^{1,2}(\mathbb{R}^{N})^{\phi_{2}}\to\r$ given by 
$$J(v):= \frac{1}{2}\|v\|^2 - \frac{1}{2p}|v|_{2p}^{2p},$$
where $D^{1,2}(\mathbb{R}^{N})^{\phi_{2}}:=\{v\in D^{1,2}(\mathbb{R}^{N}): v\text{ satisfies }\eqref{lequiva} \}.$
The nontrivial critical points of $J$ belong to the Nehari manifold
\begin{equation*}
\mathcal{M}_{\infty}^{\phi_{2}}:=\{ v\in D^{1,2}(\mathbb{R}^{N})^{\phi_{2}}: v\neq 0, \ \|v\|^{2}=|v|_{2p}^{2p} \}
\end{equation*}
which is a natural constraint for $J$. There is a one-to-one correspondence $\mathcal{M}_{\infty}^{\phi_{2}}\rightarrow \mathcal{W}_{2}$ given by 
\begin{equation*}
v\mapsto (v^{+},-v^{-}),\qquad v^+:=\max\{v,0\}, \ v^-:=\min\{v,0\},
\end{equation*}
whose inverse is $(v_{1},v_{2})\mapsto v_{1}-v_{2}$, and one has that
$$J(v_1-v_2)=\frac{1}{N}(\|v_1\|^{2}+\|v_2\|^{2})\qquad\text{for every \ }(v_{1},v_{2})\in\cW_2.$$
Then, from \eqref{igu12} we get
\begin{equation*}
\begin{split}
	J(w_{1,\infty}-w_{2,\infty})&=\frac{1}{N}(\|w_{1,\infty}\|^{2}+\|w_{2,\infty}\|^{2})\\
	& =\mathfrak{c}_\infty^2:=\inf_{(v_{1},v_{2})\in \mathcal{W}_{2} }\frac{1}{N}(\|v_{1}\|^{2}+\|v_{2}\|^{2})	=\inf_{v\in \mathcal{M}_{\infty}^{\phi_{2}}} J(v).
\end{split}
\end{equation*}
This proves that $w_{1,\infty}-w_{2,\infty}$ is a solution to the Yamabe problem \eqref{eq:yamabe0} that possesses the minimum energy among all solutions that satisfy \eqref{lequiva}.
\end{proof}

The following result was used in the proof of Theorem \ref{thm:main2}. Its proof follows well known regularity arguments. We give it here for the sake of completeness.

\begin{lemma}\label{maincont}
Let $\beta_{k}<0$ and $(u_{1,k},\dots, u_{\ell,k} )$ be a solution to the system \eqref{eq:system} in $\Omega$ with $\beta=\beta_{k}$ such that $u_{i,k}\rightarrow u_{i,\infty}$ strongly in $ D_0^{1,2}(\Omega)$ for $i=1,\dots, \ell$. Then $(u_{i,k})$ is uniformly bounded in $L^{\infty}(\Omega)$.
\end{lemma}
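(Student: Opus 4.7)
The plan is to argue componentwise by reducing each equation to a scalar subsolution inequality for the critical Yamabe equation, and then running a uniform Moser iteration whose starting step is unlocked by the strong $D^{1,2}$-convergence hypothesis. Since $\beta_k<0$, each cross-term $\beta_k|u_{j,k}|^p|u_{i,k}|^{p-2}u_{i,k}$ has the opposite sign to $u_{i,k}$; hence Kato's inequality yields, in the distributional sense on $\Omega$,
\begin{equation*}
-\Delta |u_{i,k}|\leq |u_{i,k}|^{2p-1},\qquad |u_{i,k}|\in D^{1,2}_0(\Omega),
\end{equation*}
so each $v_{i,k}:=|u_{i,k}|$ is a non-negative subsolution of the scalar critical equation.

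Next, strong convergence $u_{i,k}\to u_{i,\infty}$ in $D^{1,2}_0(\Omega)$ gives strong convergence in $L^{2p}(\Omega)$ by the Sobolev embedding, and since $(2p-2)\cdot\tfrac{N}{2}=2p$, this upgrades to $v_{i,k}^{2p-2}\to v_{i,\infty}^{2p-2}$ in $L^{N/2}(\Omega)$; in particular, for every $\eta>0$ there exists $M_\eta>0$, independent of $k$, with $\sup_k\int_{\{v_{i,k}>M_\eta\}}v_{i,k}^{2p}<\eta$. Fixing $s\geq 1$ and $T>0$, one tests the subsolution inequality against $\varphi:=v_{i,k}\min(v_{i,k},T)^{2(s-1)}\in D^{1,2}_0(\Omega)$ and combines the resulting estimate with Sobolev's inequality applied to $w:=v_{i,k}\min(v_{i,k},T)^{s-1}$ to arrive at
\begin{equation*}
S\,|w|_{2p}^{2}\leq Cs^{2}\int_{\Omega}v_{i,k}^{2p-2}\,w^{2}.
\end{equation*}
Splitting the right-hand side over $\{v_{i,k}\leq M_\eta\}$ and $\{v_{i,k}>M_\eta\}$ and using Hölder's inequality together with the previous equi-integrability gives
\begin{equation*}
\int_{\Omega}v_{i,k}^{2p-2}w^{2}\leq M_\eta^{2p-2}\int_{\Omega}v_{i,k}^{2s}+\eta^{2/N}|w|_{2p}^{2},
\end{equation*}
and choosing $\eta=\eta(s)$ so that $Cs^{2}\eta^{2/N}<S/2$ absorbs the critical piece into the left-hand side; letting $T\to\infty$ via monotone convergence yields $|v_{i,k}|_{2ps}\leq K(s)\,|v_{i,k}|_{2s}$ with $K(s)$ independent of $k$.

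Starting from the uniform bound on $|v_{i,k}|_{2p}$ and iterating this estimate with $s_n=p^n$ produces uniform bounds on $|v_{i,k}|_{q}$ for every $q<\infty$; a final appeal to standard elliptic regularity (once $v_{i,k}^{2p-1}$ is uniformly bounded in some $L^q$ with $q>N/2$) then gives $\sup_k\|u_{i,k}\|_{L^\infty(\Omega)}<\infty$. The main technical obstacle is that the critical Sobolev growth of the right-hand side blocks a naive Moser argument, since the integral $\int v_{i,k}^{2p-2}w^{2}$ cannot be estimated by $|w|_2^2$ alone. Strong (as opposed to merely weak) $D^{1,2}$-convergence is precisely what provides the equi-integrability of $v_{i,k}^{2p-2}$ in $L^{N/2}$, and through the threshold $M_\eta$ allows one to absorb the bad tail into the Sobolev term and unlock the iteration; without it, the argument cannot close.
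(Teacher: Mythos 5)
Your proof is correct and follows essentially the same strategy as the paper's: a truncated Moser iteration in which the critical term $\int |u|^{2p-2}w^2$ is split into a large-value and a small-value region, with the large-value piece absorbed into the Sobolev term thanks to a $k$-uniform smallness supplied by the strong $D^{1,2}$-convergence. The points where you deviate are matters of packaging rather than substance. First, you pass to the scalar subsolution $-\Delta |u_{i,k}|\le |u_{i,k}|^{2p-1}$ via Kato's inequality, while the paper works directly with the test function $\varphi_{i,k}=u_{i,k}\min\{|u_{i,k}|^{2s},M^2\}$ and uses $u_{i,k}\varphi_{i,k}\ge 0$, $\beta_k<0$ to drop the negative coupling terms; both are fine. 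Second, you split along level sets of $v_{i,k}$ and invoke uniform integrability of $v_{i,k}^{2p}$ (equivalently, of $v_{i,k}^{2p-2}$ in $L^{N/2}$) coming from $L^{2p}$-convergence, whereas the paper splits along level sets of the limit $u_{i,\infty}$ and controls the error term $\int(|u_{i,k}|^{2p-2}-|u_{i,\infty}|^{2p-2})u_{i,k}\varphi_{i,k}$ directly; your version is slightly cleaner because it avoids that error term. Third, you stop the bootstrap once $v_{i,k}$ is uniformly bounded in $L^q$ for some fixed $q$ large enough that $v_{i,k}^{2p-1}\in L^{q'}$ with $q'>N/2$, and then invoke standard local boundedness, while the paper iterates explicitly to $L^\infty$ tracking the constants along $s_n+1=p^n$; again both finish the argument, and only finitely many steps of your recursion are needed so the growth of $K(s)$ in $s$ is immaterial. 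In short, the key idea — that strong convergence (as opposed to mere boundedness) provides exactly the $k$-uniform tail smallness needed to unlock the iteration at the critical exponent — is identical in both proofs.
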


\begin{proof}
	Let $s\geq 0$ and assume that $u_{i,k}\in L^{2(s+1)}(\Omega)$ for every $k\in\mathbb{N}$. Fix $M>0$ and define $\varphi_{i,k}:=u_{i,k}\min\{|u_{i,k}|^{2s},M^{2}\}$. Then,
	\begin{equation*}
		\nabla \varphi_{i,k} =\min \{|u_{i,k}|^{2s},M^{2} \}\nabla u_{i,k}+2s|u_{i,k}|^{2s}(\nabla u_{i,k})1_{A},
	\end{equation*}
	where $1_{A}$ is the characteristic function for the set $A:=\{x\in\Omega: |u_{i,k}(x)|^{s}< M \}$. Hence, $\varphi_{i,k}\in D_0^{1,2}(\Omega)$ and, since $(u_{1,k},\dots, u_{\ell,k} )$ solves \eqref{eq:system}, $\beta_k<0$ and $u_{i,k}\varphi_{i,k}\geq 0$, we get
	\begin{equation*}
\io\nabla u_{i,k}\cdot \nabla \varphi_{i,k} =  \io|u_{i,k}|^{2p-2}u_{i,k}\varphi_{i,k}+\sum\limits_{\substack{j=1\\j\neq i}}^\ell\beta_{k}\io|u_{j,k}|^p|u_{i,k}|^{p-2}u_{i,k}\varphi_{i,k}\leq \io|u_{i,k}|^{2p-2}u_{i,k}\varphi_{i,k}.
	\end{equation*}
	On the other hand, 
	\begin{equation*}
		\begin{split}
			\nabla u_{i,k}\cdot \nabla \varphi_{i,k}= \min \{|u_{i,k}|^{2s},M^{2} \}|\nabla u_{i,k} |^{2}+2s|u_{i,k}|^{2s}|\nabla u_{i,k}|^{2}1_{A}\geq  \min \{|u_{i,k}|^{2s},M^{2} \}|\nabla u_{i,k} |^{2}.
		\end{split}
	\end{equation*}	
Then, using the Sobolev inequality, we obtain
\begin{align*}
\Big(\int_{\Omega} \left| \min \{|u_{i,k}|^{s},M \} u_{i,k}\right|^{2p} \Big)^{\frac{2}{2p}}&\leq  C\int_{\Omega} \left| \nabla\left( \min \{|u_{i,k}|^{s},M \} u_{i,k}\right) \right|^{2}\\
&=C\int_{\Omega} \left| \left( \min \{|u_{i,k}|^{s},M \} \nabla u_{i,k} +s|u_{i,k}|^{s}(\nabla u_{i,k})1_{A} \right) \right|^{2}\\
&\leq  2C\left( \int_{\Omega}  \min \{|u_{i,k}|^{2s},M^{2} \} \left|\nabla u_{i,k}\right|^{2} + s^{2}\int_{\Omega} |u_{i,k}|^{2s}1_{A}\left| \nabla u_{i,k} \right|^{2}\right) \\
&\leq 2C(1+s^{2})\int_{\Omega}  \min \{|u_{i,k}|^{2s},M^{2} \} \left|\nabla u_{i,k}\right|^{2}\\
&\leq  2C(1+s^{2})\int_{\Omega}\nabla u_{i,k}\cdot \nabla\varphi_{i,k} \\
&\leq  2C(1+s^{2})\int_{\Omega}|u_{i,k}|^{2p-2}u_{i,k}\varphi_{i,k}.
\end{align*}
Let $K>0$ and set $\Omega_{i,K}=\{x\in \Omega: |u_{i,\infty}|^{2p-2}\geq K \}$ and $\Omega_{i,K}^{c}=\{x\in \Omega: |u_{i,\infty}|^{2p-2}< K \}$. Since $u_{i,k}\varphi_{i,k}\in L^p(\o)$ and $u_{i,k}\varphi_{i,k}\geq 0$, using the Hölder inequality we get
\begin{align*}
&\int_{\Omega}|u_{i,k}|^{2p-2}u_{i,k}\varphi_{i,k} \\
&\leq \int_{\Omega}\left(|u_{i,k}|^{2p-2}-|u_{i,\infty}|^{2p-2} \right) u_{i,k}\varphi _{i,k} +\int_{\Omega}|u_{i,\infty}|^{2p-2}  u_{i,k}\varphi _{i,k} \\
&\leq \int_{\Omega}\left(|u_{i,k}|^{2p-2}-|u_{i,\infty}|^{2p-2}\right) u_{i,k}\varphi _{i,k} +\int_{\Omega_{i,K}}|u_{i,\infty}|^{2p-2}  u_{i,k}\varphi _{i,k}+\int_{\Omega_{i,K}^{c}}|u_{i,\infty}|^{2p-2}  u_{i,k}\varphi _{i,k} \\
&\leq \Big( \int_{\Omega}\left| |u_{i,k}|^{2p-2}-|u_{i,\infty}|^{2p-2}\right|^{\frac{p}{p-1}}\Big)^{\frac{p-1}{p}}\left| u_{i,k}\varphi _{i,k}\right|_{p}+\Big(\int_{\Omega_{i,K}} |u_{i,\infty}|^{2p} \Big)^{\frac{p-1}{p}} \left| u_{i,k}\varphi _{i,k}\right|_{p} +K\int_{\Omega}  u_{i,k}\varphi _{i,k},
\end{align*}
where $| \ \ |_p$ denotes the norm in $L^p(\o)$. Since $u_{i,\infty}\in L^{2p}(\Omega)$, we may fix $K$ large enough so that	
\begin{equation*}
	\Big(\int_{\Omega_{i,K}} |u_{i,\infty}|^{2p} \Big)^{\frac{p-1}{p}} <\frac{1}{8C(1+s^{2})}.
\end{equation*}
Moreover, since  $u_{i,k}\rightarrow u_{i,\infty}$ strongly in $ L^{2p}(\Omega)$, there exists $k_{0}$ such that
\begin{equation*}
	\left( \int_{\Omega}\left| |u_{i,k}|^{2p-2}-|u_{i,\infty}|^{2p-2}\right|^{\frac{p}{p-1}}\right)^{\frac{p-1}{p}}\leq C_{1}	 \left( \int_{\Omega}\left| u_{i,k}-u_{i,\infty}\right|^{2p}\right)^{\frac{p-1}{p}} \leq \frac{1}{8C(1+s^{2})}.
\end{equation*}	
for any $k>k_{0}$. Thus,
\begin{align*}
\left| u_{i,k}\varphi _{i,k}\right|_{p} &=\Big(\int_{\Omega} \left| \min \{|u_{i,k}|^{s},M \} u_{i,k}\right|^{2p} \Big)^{\frac{1}{p}}\\
&\leq \frac{1}{4}\left| u_{i,k}\varphi _{i,k}\right|_{p}+ \frac{1}{4}\left| u_{i,k}\varphi _{i,k}\right|_{p} + 2C(1+s^{2})K\int_{\Omega}  u_{i,k}\varphi _{i,k}.
\end{align*}
Hence,
\begin{align*}
\frac{1}{2}\left(\int_{\Omega}|u_{i,k}\min\{|u_{i,k}|^{2s},M^{2}\}u_{i,k}|^{p}\right)^{\frac{1}{p}} &=\frac{1}{2}\left| u_{i,k}\varphi _{i,k}\right|_{p}\leq  2K C(1+s^{2})\int_{\Omega}u_{i,k}\varphi _{i,k}\\
&=  2K C(1+s^{2})\int_{\Omega}  \min\{|u_{i,k}|^{2s},M^{2}\}u_{i,k}^{2}.
\end{align*}
Letting $M\rightarrow \infty$ we derive that
\begin{equation}
\frac{1}{2}\Big( \int_{\Omega}  |u_{i,k}|^{2p(s+1)}\Big)^{\frac{1}{p}}
\leq 2K C(1+s^{2})\int_{\Omega} |u_{i,k}|^{2(s+1)}
\leq  2K C(1+s)^{2}\int_{\Omega} |u_{i,k}|^{2(s+1)}.\label{s1}
\end{equation}

Now, to obtain the uniform bound in $L^\infty(\rn)$, we argue as in \cite[Lemma 3.2]{tt} (see also \cite[Lemma A.1]{css2}). First, let $C_1$ be such that $\int_{\rn} |u_{i,k}|^{2p} \leq C_1$ and let $C_0:=4 K C$. Then, using $s+1=p$, estimate \eqref{s1} yields that
\begin{equation*}
\int_{\Omega}  |u_{i,k}|^{2p^2}
\leq C_0^p p^{2p}C_1^p=:C_2.
\end{equation*}
Next, using $s+1=p^2$, estimate \eqref{s1} yields that
\begin{equation*}
\int_{\Omega}  |u_{i,k}|^{2p^3}
\leq C_0^p p^{4p} C_2^p
=C_0^{p+p^2} p^{2(p^2+2p)} C_1^{p^2}
=:C_3.
\end{equation*}
Now, iterating this procedure with $s_n+1=p^n$ for $n=1,\ldots,m$ for $m\in \mathbb N$, we obtain that
\begin{equation*}
\int_{\Omega}  |u_{i,k}|^{2p^m}
\leq C_0^{\sum_{n=1}^m p^n} p^{2\sum_{n=1}^m (m-n+1) p^{n}} C_1^{p^m}=:C_m.
\end{equation*}
Note that
\begin{align*}
 \frac{1}{p^m}\sum_{n=1}^m p^n
 =\frac{1}{p^m}\left(\frac{p \left(p^m-1\right)}{p-1}\right)
 =\frac{p \left(1-p^{-m}\right)}{p-1}\to \frac{p}{p-1}
 \qquad \text{ as }m\to \infty.
\end{align*}
And, using that $\sum_{n=1}^m np^{n-1}=\partial_p \sum_{n=1}^m p^n$,
\begin{align*}
 \frac{1}{p^m}\sum_{n=1}^m (m-n+1) p^{n}
 =\frac{1}{p^m}\left( \frac{p \left(p^{m+1}-m p+m-p\right)}{(p-1)^2}\right)
 =\frac{(m-(m+1) p) p^{1-m}+p^2}{(p-1)^2}\to \frac{p^2}{(p-1)^2}
\end{align*}
as $m\to \infty$. Then,
\begin{align*}
\sup_\Omega|u_{i,k}|
=\lim_{m\to \infty}\left(
\int_\Omega|u_{i,k}|^{2p^m}\right)^\frac{1}{2p^m} \leq C_m^{\frac{1}{2p_m}}
\to
C_0^{\frac{p}{2(p-1)}} p^{\frac{p^2}{(p-1)^2}} C_1^{\frac{1}{2}}<\infty,
\end{align*}
as claimed.
\end{proof}

\section{Nodal solutions for the Yamabe problem}
\label{sec:nodal}

In this section we prove our last main result.

\begin{proof}[Proof of Theorem \ref{thm:nodal}]
$(i):$ \ Let $G$ act on $\rn$ as in \eqref{eq:G} and $\vr_\ell$ be as defined in \eqref{eq:rho}. Let $\Gamma_\ell$ be the subgroup of $O(N)$ generated by $G\cup\{\vr_\ell\}$ and $\phi_\ell:\Gamma_\ell\to\z_2:=\{1,-1\}$ be the homomorphism of groups given by $\phi_\ell(g):=1$ if $g\in G$ and $\phi_\ell(\vr_\ell):=-1$. Since $\vr_\ell$ has order $2\ell$, this homomorphism is well defined. The $\Gamma_\ell$-orbit of a point $x\in\rn$, defined as
$$\Gamma_\ell x:=\{\gamma x:\gamma\in\Gamma_\ell\},$$
is infinite if $x=(z,y)\in\cc^2\times\r^{N-4}$ with $z\neq 0$, while $(0,y)$ is a fixed point of $\Gamma_\ell$ for every $y\in\r^{N-4}$. Then, by \cite[Corollary 3.4]{c}, there exists a sign-changing solution $\widehat{w}_\ell$ of the Yamabe problem \eqref{eq:yamabe0} such that
\begin{equation*}
\widehat{w}_\ell(gx)=\widehat{w}_\ell(x) \quad\text{and}\quad \widehat{w}_\ell(\vr_{\ell} x)= -\widehat{w}_\ell(x) \qquad\text{for all \ }g\in G, \ x\in\rn,
\end{equation*}
with $G$ as in \eqref{eq:G}, and $\widehat{w}_\ell$ has least energy among all nontrivial solutions to \eqref{eq:yamabe0} with these properties.

$(ii):$ \ To prove this statement if suffices to show that, if $\ell=nm$ with $n$ even and $u$ and $v$ are nontrivial functions that satisfy
$$u(\vr_\ell x)=-u(x)\quad\text{and}\quad v(\vr_m x)=-v(x)\quad\text{for every \ }x\in\rn,$$
then $u\neq v$. Indeed, arguing by contradiction, assume that $u=v$ and choose $x_0\in\rn$ such that $u(x_0)=v(x_0)\neq 0$. Then, since $\vr_\ell^n=\vr_m$ and $n$ is even, we have that 
$$u(\vr_mx_0)=u(\vr_\ell^nx_0)=u(x_0)=v(x_0)=-v(\vr_mx_0)=-u(\vr_mx_0).$$
This is a contradiction.
\end{proof}

\begin{remark}
\emph{Let $\o_1:=\{x\in\rn:\widehat{w}_\ell(x)>0\}$ and $\o_2:=\{x\in\rn:\widehat{w}_\ell(x)<0\}$. It is easy to verify that $(\o_1,\o_2)$ is an optimal $(K_\ell,\vr_\ell,2)$-partition for the Yamabe equation \eqref{eq:yamabe0}, where $K_\ell:=\ker\phi_\ell$ is the subgroup generated by $G\cup\{\vr_\ell^2\}$.}
\end{remark}

\subsection*{Acknowledgments}

J. Faya and A. Saldaña thank the Instituto de Matemáticas - Campus Juriquilla for the kind hospitality.

\bigskip

\begin{flushleft}
\textbf{Mónica Clapp}\\
Instituto de Matemáticas\\
Universidad Nacional Autónoma de México \\
Campus Juriquilla\\
76230 Querétaro, Qro., Mexico\\
\texttt{monica.clapp@im.unam.mx} 
\medskip

\textbf{Jorge Faya}\\
Instituto de Ciencias Fisicas y Matem\'aticas\\
Universidad Austral de Chile\\
Facultad de Ciencias\\
Av. Rector Eduardo Morales Miranda 23\\
Valdivia, Chile\\
\texttt{jorge.faya@uach.cl}
\medskip

\textbf{Alberto Saldaña}\\
Instituto de Matemáticas\\
Universidad Nacional Autónoma de México \\
Circuito Exterior, Ciudad Universitaria\\
04510 Coyoacán, Ciudad de México, Mexico\\
\texttt{alberto.saldana@im.unam.mx}
\end{flushleft}
	
\end{document}